\newtheorem{theorem}{Theorem}[section]
\newtheorem{lemma}[theorem]{Lemma}
\theoremstyle{definition}
\newtheorem{definition}[theorem]{Definition}
\newtheorem{example}[theorem]{Example}
\newtheorem{proposition}[theorem]{Proposition}
\newtheorem{corollary}[theorem]{Corollary}
\theoremstyle{remark}
\newtheorem{remark}[theorem]{Remark}
\numberwithin{equation}{section}
\begin{document}

\newcommand{\spacing}[1]{\renewcommand{\baselinestretch}{#1}\large\normalsize}
\spacing{1.14}

\title[Reilly formula for a class of elliptic differential operator]{Extension of Reilly formula for a class of elliptic differential operator in divergence form}

\author {Seyed Hamed Fatemi }

\address{Department of Mathematics, Tarbiat Modares University, Tehran, Iran.} \email{fatemi.shamed@gmail.com}
\author{Shahroud  Azami}
\address{Department of Pure Mathematics, Faculty of Sciences, Imam Khomeini International University, Qazvin, Iran. }
 \email{azami@sci.ikiu.ac.ir}
\keywords{Bochner technique,
Reilly formula,
comparison theorem,
eigenvalue estimate.\\
AMS 2010 Mathematics Subject Classification: 53C21, 53C23. }

\date{\today}

\begin{abstract} In this paper, we prove the Reilly formula for the elliptic divergence type operator ${L_A}(u): = div(A\nabla u)$ on a compact Riemannian manifold $M$ where $A $ is a positive definite divergence free self-adjoint $(1,1)$-Codazzi tensor field on $M$ and then by assumption on extension of Ricci tensor we get some lower estimates for the first eigenvalue of  ${L_A}$.
\end{abstract}

\maketitle
\section{Introduction}
Elliptic operators on manifolds is one of the important extensions of the Laplace operator. One knows that a second-order linear differential operator without zero order term   $L:{C^\infty }(M) \to {C^\infty }(M)$ can be written as
\[Lf = div(A\nabla f) + \left\langle {V,\nabla f} \right\rangle, \]
where $ A \in \Gamma (End(TM)) $ is a self-adjoint with respect to the metric $\left\langle {\,,\,} \right\rangle $.
So an operator of the form ${L_A}(f) = div(A\nabla f)$ is an important kind of elliptic operator.
 One of the important issue associated with the operator $ L_A $  is study of the spectrum of this operator when the manifold $ M $ is compact. In this regard \cite{alencar2015eigenvalue,do2010inequalities,gomes2016eigenvalue,shi2016eigenvalue} got valuable results. One way to get a lower estimate of the first eigenvalue of the Laplace operator is Reilly formula \cite{reilly1977applications}. The formula is proved by integration from the usual Bochner formula and states that for each smooth function $u$ on a manifold $M$ one has,
 \begin{eqnarray}\nonumber
&&\int_M {\left( {{{\left( {\Delta u} \right)}^2} - {{\left| {hess(u)} \right|}^2} - Ric\left( {\nabla u,\nabla u} \right)} \right)} dvo{l_g} \\\label{originalReilly}&&= \int_{\partial M} {\left( {(n - 1)Hu_n^2 + II({\nabla ^\partial }u,{\nabla ^\partial }u) + 2{u_n}{\Delta ^\partial }(u)} \right)dvo{l_{\bar g}}} ,
\end{eqnarray}
where ${u_n} = \frac{{\partial u}}{{\partial \overrightarrow{n}}}$ and ${\nabla ^\partial },\,{\Delta ^\partial }$ are the gradient and Laplacian with respect to the metric of $\partial M$.
This  formula has many interesting consequences in geometry such as estimates of the first eigenvalue of the Laplace operator, Alexandrov's theorem and the Heintze-Karcher's inequality.
The Reilly formula (\ref{originalReilly}) is similarly proved for weighted manifolds \cite{ma2009extension} as follows,
\begin{eqnarray}\nonumber
  && \int_M \left( {{\left( {{\Delta _f}u} \right)}^2} - {{\left| {hess(u)} \right|}^2} - Ri{c_f}\left( {\nabla u,\nabla u} \right) \right)dvo{l_g}\\\label{weightedReilly}
&&
=   {\int_{\partial M} {u_n^{}\left( {Hu_n^{} - \left\langle {\nabla H,\nabla u} \right\rangle  + {\Delta ^\partial }(u)} \right)dvo{l_{\bar g}}} }  \\\nonumber
  && { + \int_{\partial M} {\left( {II({\nabla ^\partial }u,{\nabla ^\partial }u) - \left\langle {{\nabla ^\partial }u,{\nabla ^\partial }{u_n}} \right\rangle } \right)dvo{l_{\bar g}}} }.
\end{eqnarray}
Estimate of the first eigenvalue of the Laplace operator, is one of the important and long-standing problem in geometric analysis and PDE theory on manifolds. For example, it gives an upper bound for the constant in the Poincar\'{e} inequality. So it is very important to find a good lower estimate for the first eigenvalue of the Laplace operator. Another application  is in the estimate of the heat kernel \cite{chavel1984eigenvalues, chen1997poincare, li1980estimates,li2012geometric, ma2009extension}. Similar results have been obtained for weighted manifolds \cite{grigor2006heat}.\par
In this  paper, we get a Reilly-type formula for the elliptic divergence type operator\linebreak ${L_A}u = div(A\nabla u)$, when $A$ is a divergence free positive definite self-adjoint $(1,1)$-Codazzi tensor field on $M$  and obtain some lower estimates for the first eigenvalue of this operator. Also the approach of this paper is more similar to the corresponding results for the Laplace operator.\par
Explicitly, the results are as follows.
At first we get the Reilly formula for the elliptic operator ${L_A}$, when $ A $ is a parallel tensor field. As an important consequence we get the following estimates of the lower bound of the first eigenvalue of the operator ${L_A}$ , when $ A $ is parallel.
\begin{theorem}\label{eigen1}
Let $M$ be a closed  Riemannian manifold and $A$ be a parallel symmetric and positive semi-definite operator on $M$ such that one of the following conditions holds,
\begin{itemize}
\item[1)]$Ri{c_A} \ge Kg$ and $K > 0$ is a constant,
\item[2)]$Ri{c_A}(X,X) \ge K\left\langle {AX,X} \right\rangle$ and $K > 0$ is a constant.
\end{itemize}
Then, one has the following estimates for the first eigenvalue of the operator ${L_A}$,
\begin{itemize}
\item[1)]\begin{equation}
\lambda  \ge \frac{{Trace(A)K}}{{Trace(A) - {\delta _1}}},\label{e1b1}
\end{equation}
\item[2)]\begin{equation}
\lambda  \ge \frac{{Trace(A){\delta _1}K}}{{Trace(A) - {\delta _1}}},\label{e1b2}
\end{equation}
\end{itemize}
where $\lambda $ is the first positive eigenvalue of the operator ${L_A}$. If the equality holds then $ A $ is scalar operator, i.e. $A = \alpha I$ for some real constant $\alpha$ and $ M $ has constant sectional curvature $\frac{K}{{n - 1}}$.
\end{theorem}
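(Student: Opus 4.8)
The plan is to specialise the $L_A$-Reilly formula to the closed manifold $M$, where every boundary integral drops out, and then to run a weighted version of the Lichnerowicz argument. Writing $H=hess(u)$ and using that $A$ is parallel, integrating the Bochner identity for $L_A$ over $M$ produces
\begin{equation*}
\int_M\Big(\langle AH,H\rangle+\langle\nabla(L_Au),\nabla u\rangle+Ric_A(\nabla u,\nabla u)\Big)\,dvol_g=0,
\end{equation*}
where $\langle AH,H\rangle=\mathrm{tr}(AH^2)$ is the $A$-weighted squared Hessian and $Ric_A(X,X)=\sum_{i,j}A^{ij}R_{ikjl}X^kX^l$ is precisely the extended Ricci tensor of the hypotheses (for $A=I$ this is the classical formula). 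I would then fix a first eigenfunction $u$ normalised by $\int_M u^2\,dvol_g=1$, with $L_Au=-\lambda u$ and $\lambda>0$. Self-adjointness of $L_A$ and integration by parts give three identities: $\int_M(L_Au)^2=\lambda^2$, $\int_M\langle A\nabla u,\nabla u\rangle=-\int_M uL_Au=\lambda$, and $\int_M\langle\nabla(L_Au),\nabla u\rangle=-\lambda\int_M|\nabla u|^2$.

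Two pointwise inequalities drive the estimate. Diagonalising $A$ at a point with eigenvalues $\delta_1\le\cdots\le\delta_n$, Cauchy--Schwarz gives $(L_Au)^2=\big(\sum_i\delta_iH_{ii}\big)^2\le Trace(A)\sum_i\delta_iH_{ii}^2\le Trace(A)\,\langle AH,H\rangle$, so that $\langle AH,H\rangle\ge(L_Au)^2/Trace(A)$ and hence $\int_M\langle AH,H\rangle\ge\lambda^2/Trace(A)$. Since $A\ge\delta_1 I$, we also have $\langle A\nabla u,\nabla u\rangle\ge\delta_1|\nabla u|^2$ pointwise, so $\int_M|\nabla u|^2\le\lambda/\delta_1$.

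Feeding the three identities into the Reilly identity yields $\int_M\langle AH,H\rangle=\lambda\int_M|\nabla u|^2-\int_M Ric_A(\nabla u,\nabla u)$. In case (1), $Ric_A\ge Kg$ bounds the last term below by $K\int_M|\nabla u|^2$, so the right-hand side is at most $(\lambda-K)\int_M|\nabla u|^2$; combining with the Cauchy--Schwarz bound and $\int_M|\nabla u|^2\le\lambda/\delta_1$ gives $\lambda^2/Trace(A)\le(\lambda-K)\lambda/\delta_1$, which rearranges to (\ref{e1b1}). In case (2), $Ric_A(X,X)\ge K\langle AX,X\rangle$ bounds the last term below by $K\lambda$, and the same two estimates give $\lambda^2/Trace(A)\le\lambda^2/\delta_1-K\lambda$, which rearranges to (\ref{e1b2}). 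The denominator $Trace(A)-\delta_1$ is exactly the weighted analogue of the Lichnerowicz factor $(n-1)/n$, to which it reduces when $A=\alpha I$.

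The step I expect to be most delicate is reconciling the \emph{ordinary} norm $|\nabla u|^2$ that the Bochner correction term forces upon us with the \emph{$A$-weighted} norm $\langle A\nabla u,\nabla u\rangle$ that the eigenvalue equation controls; passing between them through the smallest eigenvalue $\delta_1$ is what produces the sharp denominator, and one must check that $\lambda\ge K$ so the inequalities point the same way (the conclusion guarantees this a posteriori). For the rigidity claim, equality propagates back through both pointwise inequalities: equality in Cauchy--Schwarz forces $H$ to be a multiple of the identity, and $\langle A\nabla u,\nabla u\rangle=\delta_1|\nabla u|^2$ forces $\nabla u$ into the lowest eigendistribution of $A$ wherever $\nabla u\ne0$. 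As this distribution is parallel and $u$ is non-constant, it must be all of $TM$, i.e.\ $A=\alpha I$; then $L_A=\alpha\Delta$ and the problem collapses to the equality case of the classical Lichnerowicz--Obata theorem, forcing $M$ to be a round sphere of the stated constant sectional curvature.
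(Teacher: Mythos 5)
Your argument is correct and follows essentially the same route as the paper: integrate the extended Bochner/Reilly identity over the closed manifold, apply the generalized Cauchy--Schwarz bound $Trace(A\circ hess^2 u)\ge (L_Au)^2/Trace(A)$ together with $\delta_1\int_M|\nabla u|^2\le\lambda$, rearrange, and obtain rigidity from $Hess\,u=hg$ via the Obata-type Proposition. The sign issue you flag about needing $\lambda\ge K$ is harmless and need not be deferred "a posteriori": the intermediate inequality $\lambda^2/Trace(A)\le(\lambda-K)\int_M|\nabla u|^2$ already forces $\lambda>K$ before you invoke $\int_M|\nabla u|^2\le\lambda/\delta_1$ (the paper establishes it first by simply discarding the nonnegative Hessian term).
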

As similar as the original one for the Laplace operator the estimates of the Theorem \ref{eigen1} are trivial when $ K \le 0 $, so by adapting of the Li and Yau method we get the following results when $ K \le 0 $.
\begin{theorem}\label{eigenpp2}
Let $ M $ be a closed Riemannian manifold, $A$ be a parallel symmetric and positive semi-definite operator on $M$ and $Ri{c_A} \ge  - K$ for some $K > 0$, then we have the following estimate for the first eigenvalue of the operator ${L_A}$,
\[2\left( {\alpha  + \sqrt {{\alpha ^2} + K\alpha } } \right)\exp \left( { - 1 - \sqrt {1 + \frac{K}{\alpha }} } \right) \le \lambda ,\]
where
$\alpha  = \frac{{\delta _1^2}}{{{d^2}Trace\left( A \right)}}$, $d=diam(M)$ and $ {\delta _1} $ is defined in Definition \ref{minmax}.
\end{theorem}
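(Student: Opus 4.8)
The plan is to transplant the Li--Yau gradient estimate \cite{li1980estimates} from $\Delta$ to $L_A$. The feature that makes this possible is that $A$ is parallel: then $L_A u = \mathrm{tr}(A\,\nabla^2 u)$, $A$ commutes with covariant differentiation, and the Bochner identity underlying the Reilly formula established above specializes to
\[
\tfrac12 L_A|\nabla u|^2 = \mathrm{tr}\!\left(A\,(\nabla^2 u)^2\right) + \langle \nabla u, \nabla L_A u\rangle + Ric_A(\nabla u,\nabla u),
\]
reducing to the usual Bochner formula when $A=I$. I would start from a first eigenfunction $u$, $L_A u = -\lambda u$, scaled so that $\max_M|u|=1$ and (after replacing $u$ by $-u$ if needed) $\max_M u = 1$. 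Two facts feed the estimate: the hypothesis $Ric_A \ge -K$, and the trace Cauchy--Schwarz inequality $\mathrm{tr}(A(\nabla^2 u)^2)\ge (L_A u)^2/Trace(A)=\lambda^2u^2/Trace(A)$, which is exactly where $Trace(A)$ is produced.

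Next, for a parameter $a>1$ I would apply the maximum principle to the test function $G=|\nabla u|^2/(a-u)^2$ (well defined since $a>1\ge u$). At an interior maximum of $G$ on the closed manifold $M$ one has $\nabla G=0$ and, because $A$ is positive semidefinite, $L_A G\le 0$. Substituting the relation $\nabla|\nabla u|^2=-2(a-u)^{-1}|\nabla u|^2\nabla u$ coming from $\nabla G=0$ into $L_A G$, inserting the specialized Bochner identity, and controlling the curvature term by $Ric_A\ge -K$ and the weighted terms produced by $L_A((a-u)^{-2})$ through the comparison $\delta_1|\nabla u|^2\le\langle A\nabla u,\nabla u\rangle$ (this is where $\delta_1$ of Definition \ref{minmax} enters), the inequality $L_A G\le 0$ reduces to an explicit bound $\sup_M G\le C$ with $C=C(\lambda,K,a,Trace(A),\delta_1)$. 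This gives the pointwise gradient estimate $|\nabla u|/(a-u)\le\sqrt{C}$.

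Finally I would integrate this pointwise estimate. Since $\int_M u=0$, the function $u$ vanishes at some point $p$; let $q$ be a maximum point, so $u(q)=1$. Along a minimizing geodesic $\gamma$ from $p$ to $q$ one has $\frac{d}{ds}\log(a-u(\gamma(s)))=-\langle\nabla u,\dot\gamma\rangle/(a-u)$, whence
\[
\log\frac{a}{a-1}=\log\frac{a-u(p)}{a-u(q)}\le\int_\gamma\frac{|\nabla u|}{a-u}\,ds\le\sqrt{C}\,\ell(\gamma)\le\sqrt{C}\,d.
\]
Solving this for $\lambda$ through the expression for $C$ and then optimizing over the free parameter $a>1$ is what produces the closed form in the statement: the optimization yields the radical $\sqrt{1+K/\alpha}$ and the root $\alpha+\sqrt{\alpha^2+K\alpha}$, the diameter contributes the $d^2$, and the scalars accumulate into $\alpha=\delta_1^2/(d^2\,Trace(A))$, while exponentiating $\log\frac{a}{a-1}$ accounts for the factor $\exp(-1-\sqrt{1+K/\alpha})$.

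I expect the main obstacle to be the maximum-principle computation of the second step. In contrast with the Laplacian, applying $L_A$ to $G$ generates weighted inner products $\langle A\nabla u,\nabla u\rangle$ and an additional first-order term from $L_A((a-u)^{-2})$, and one must check that, after using $\nabla G=0$, every such term is dominated by quantities controlled solely by $\delta_1$, $Trace(A)$, $K$ and $\lambda$, so that $L_A G\le0$ indeed collapses to a clean scalar inequality for $\sup_M G$. A second, more technical, difficulty is performing the optimization over $a$ precisely enough to recover the exact constants $2(\alpha+\sqrt{\alpha^2+K\alpha})$ and $\exp(-1-\sqrt{1+K/\alpha})$ rather than a weaker bound of the same shape.
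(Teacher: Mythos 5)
Your proposal is correct and follows essentially the same route as the paper: both run the Li--Yau gradient estimate through the extended Bochner formula, apply the maximum principle to the quantity $|\nabla u|^2/(a\mp u)^2$ (your $G$ coincides with the paper's $Q=|\nabla \ln(a+u)|^2$ after the harmless replacement $u\mapsto -u$), use $\mathrm{Trace}\left(A\circ \mathrm{hess}^2\right)\ge \frac{1}{\mathrm{Trace}(A)}(\Delta_A\,\cdot)^2$ together with $\delta_1|\cdot|^2\le\langle\cdot,A\,\cdot\rangle$, integrate $\sqrt{Q}$ along a minimizing geodesic, and optimize over $a$. The only difference is bookkeeping: the paper normalizes $\min u=-1$ and works with $v=\ln(a+u)$, so the crucial quadratic term $Q^2$ falls out of squaring $\Delta_A v=-\lambda u/(a+u)-|\nabla v|_A^2$ rather than out of the first-order condition $\nabla G=0$ as in your setup.
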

For the Codazzi divergence free tensor fields, we get the following extended Reilly formula,
\begin{theorem}[\textbf{Extended Reilly formula}]\label{Reilly}
Let $M$ be a Riemannian manifold with boundary $\partial M$ and $A$ be a $(1,1)$-Codazzi tensor field with $div\left( A \right) = 0$ then,
\[B = C,\]
where
\begin{eqnarray}\nonumber
B&=&\int_{\partial M}\left(\langle \nabla _{{\nabla ^\partial }u}{\nabla ^\partial }u,A\overrightarrow{n}\rangle - 2\langle sha{p^\partial }({\nabla ^\partial }u),A\overrightarrow{n}\rangle dvo{l_{\bar g}}  \right)\\&&+ \frac{1}{2}\int_{\partial M}\langle{\nabla ^\partial }u,\left( {\nabla _{\overrightarrow{n}}}A\right){\nabla ^\partial }u\rangle dvo{l_{\bar g}}
+ \int_{\partial M} \left( {u_n^2H_A^\partial } - \langle A\overrightarrow{n},{\nabla ^\partial }{u_n} \rangle  \right)dvo{l_{\bar g}}\\\nonumber&&
+ \frac{1}{2}\int_{\partial M} {u_n^2}\langle \overrightarrow{n},\left( {\nabla _{\overrightarrow{n}}}A \right)\overrightarrow{n}\rangle dvo{l_{\bar g}}
 + \int_{\partial M} \left( \left( {\nabla ^\partial }u.{u_n} \right)\langle {\overrightarrow{n},A\overrightarrow{n}} \rangle  - {u_n}{\Delta _A^\partial (u)} \right)dvo{l_{\bar g}} \\\nonumber&&+ \int_{\partial M} {u_n}\langle {\nabla ^\partial }u,\left( {\nabla _{\overrightarrow{n}}}A \right)\overrightarrow{n} \rangle dvo{l_{\bar g}}.
\end{eqnarray}
and
\begin{eqnarray}\nonumber
  C&=& \int_M\left( Trace\left( A \circ hes{s^2}(u) \right) \right) dvo{l_g} - \int_M {\Delta _A}(u)\left( \Delta u \right) dvo{l_g}\\&& + \int_M Ri{c_A}\left( \nabla u,\nabla u \right) dvo{l_g}
   + \frac{1}{2}\int_M \left\langle \nabla u,\left( \Delta A \right)\nabla u\right\rangle dvo{l_g}.
\end{eqnarray}
wherein $H_A^\partial : = Trace(A \circ shap{e^\partial })$ is defined as an extended mean curvature of the boundary $\partial M$, $\overrightarrow{n}$ is the outward unit vector field on $\partial M$ and $\overline g $ is the restricted metric on $\partial M$ and  ${\nabla ^\partial }$, $\Delta _A^\partial $ are gradient and extended Laplacian with respect to the metric ${\bar g}$.
\end{theorem}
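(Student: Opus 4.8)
The plan is to obtain the identity $B=C$ by integrating a Bochner-type formula adapted to $L_A$ and then decomposing the resulting boundary integrals along $\partial M$. The first step is to establish the pointwise identity
\[
\frac{1}{2}L_A\left(|\nabla u|^2\right)=Trace\left(A\circ hess^2(u)\right)+\left\langle\nabla u,\nabla\left(\Delta_A u\right)\right\rangle+Ric_A(\nabla u,\nabla u)-Trace\left(\left(\nabla_{\nabla u}A\right)\circ hess(u)\right).
\]
To derive it I would write $\frac{1}{2}L_A(|\nabla u|^2)=\frac{1}{2}div\left(A\nabla|\nabla u|^2\right)$, use $\frac{1}{2}\nabla|\nabla u|^2=hess(u)(\nabla u)$, and distribute the divergence. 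The hypothesis $div(A)=0$ annihilates the term in which the divergence falls on $A$; the Ricci identity, applied to commute the two covariant derivatives that land on $\nabla u$, produces the curvature contraction, which is by definition the extended Ricci term $Ric_A$; the factor $Trace(A\circ hess^2(u))$ is the pointwise square; and the remaining piece, in which one derivative differentiates $A$ in the direction $\nabla u$, is the last summand.

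Next I would integrate this identity over $M$. Applying the divergence theorem for $L_A$ to the left-hand side gives $\frac{1}{2}\int_{\partial M}\langle A\nabla|\nabla u|^2,\overrightarrow{n}\rangle=\int_{\partial M}\langle hess(u)(\nabla u),A\overrightarrow{n}\rangle$, which is the source of the Hessian, shape-operator and $\langle\overrightarrow{n},A\overrightarrow{n}\rangle$ terms of $B$. For the term $\langle\nabla u,\nabla(\Delta_A u)\rangle$ I would instead integrate by parts with the \emph{ordinary} divergence theorem, obtaining $-\int_M(\Delta_A u)(\Delta u)+\int_{\partial M}u_n\,\Delta_A u$; this is precisely where the mixed interior term $-\Delta_A(u)\,\Delta u$ of $C$ originates, together with a boundary contribution that will feed the $u_n^2H_A^\partial$, $\langle A\overrightarrow{n},\nabla^\partial u_n\rangle$ and $\Delta_A^\partial(u)$ terms of $B$ once the full $\Delta_A u$ on $\partial M$ is split into its tangential part $\Delta_A^\partial u$ and its normal contributions.

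The term $-\int_M Trace\left(\left(\nabla_{\nabla u}A\right)\circ hess(u)\right)$ is handled using the Codazzi hypothesis. Since $A$ is self-adjoint and Codazzi, the covariant derivative $\nabla A$ is totally symmetric in its three indices. Writing the interior integral in an orthonormal frame as $\int_M u_i\,u_{kl}\,\nabla_iA_{kl}$, I would write one Hessian factor as a covariant derivative of $\nabla u$, integrate by parts once, and then use total symmetry to recognize that the two contractions produced are in fact equal; solving the resulting linear relation yields
\[
\int_M Trace\left(\left(\nabla_{\nabla u}A\right)\circ hess(u)\right)=-\frac{1}{2}\int_M\left\langle\nabla u,(\Delta A)\nabla u\right\rangle+\frac{1}{2}\int_{\partial M}\left\langle\nabla u,\left(\nabla_{\overrightarrow{n}}A\right)\nabla u\right\rangle.
\]
This supplies the $\frac{1}{2}\langle\nabla u,(\Delta A)\nabla u\rangle$ term of $C$ and, after decomposing $\nabla u=\nabla^\partial u+u_n\overrightarrow{n}$, the three $(\nabla_{\overrightarrow{n}}A)$ boundary terms appearing in $B$.

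Finally, I would carry out the boundary bookkeeping. Writing $\nabla u=\nabla^\partial u+u_n\overrightarrow{n}$ everywhere on $\partial M$, using the relation between $\nabla_X\overrightarrow{n}$ and $shap^\partial(X)$ for tangential $X$, and relating the full operators $\Delta_A u$ and $hess(u)$ on $\partial M$ to their tangential counterparts $\Delta_A^\partial u$, $H_A^\partial$ and to $u_n$, I expect each boundary integral produced above to reassemble into the six groups of terms comprising $B$. This last step is the main obstacle: it is a lengthy computation in which every occurrence of $hess(u)$, of $\nabla|\nabla u|^2$, and of $\nabla A$ restricted to $\partial M$ must be split into tangential and normal components and matched, with the correct signs, against the $shap^\partial$, $H_A^\partial$, $\Delta_A^\partial$ and $(\nabla_{\overrightarrow{n}}A)$ expressions in $B$. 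The Codazzi symmetry and the self-adjointness of $A$ are exactly what force the various cross terms to combine correctly.
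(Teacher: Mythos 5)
Your proposal is correct and follows essentially the same route as the paper: integrate the extended Bochner formula for $L_A$ (which under $div(A)=0$ reduces to exactly your pointwise identity, with $\Delta_{(\nabla_{\nabla u}A)}u=Trace\left((\nabla_{\nabla u}A)\circ hess(u)\right)$), then use the total symmetry of $\nabla A$ coming from the Codazzi and self-adjointness hypotheses to trade that term for $\frac{1}{2}\left\langle\nabla u,(\Delta A)\nabla u\right\rangle$ plus the $\left(\nabla_{\overrightarrow{n}}A\right)$ boundary integrals, and finally decompose along $\partial M$ as in the parallel case. The only cosmetic difference is that the paper cites the Bochner formula and phrases your integration-by-parts step as the pointwise identity $\Delta_{(\nabla_{\nabla u}A)}u=\frac{1}{2}\left(L_{(\nabla_{\nabla u}A)}u-\left\langle\nabla u,(\Delta A)\nabla u\right\rangle\right)$ before applying the divergence theorem, which is the same computation.
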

Similarly we get the following estimates, for the first eigenvalues.
\begin{theorem}\label{eigennp1} Let $M$ be a closed Riemannian manifold and $A$ be a positive semi-definite $(1,1)$-Codazzi tensor on $M$ such that $ Trace(A) $ is constant. Also for each vector field $ X $ with $\left| X \right| = 1$ one has
\[Ri{c_A}\left( {X,X} \right) + Ric(X,AX) \ge 2K > 0,\]
then the following estimate for the first eigenvalue of ${L_A}$ is obtained,
\[\lambda  \ge \frac{{Trace\left( {A\,} \right)K}}{{Trace\left( {A\,} \right) - {\delta _1}}}.\]
\end{theorem}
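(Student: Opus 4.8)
The plan is to feed a first eigenfunction into the Extended Reilly formula of Theorem \ref{Reilly} and then bound the four bulk integrals making up $C$ one at a time, paralleling the mechanism behind Theorem \ref{eigen1}. Since $M$ is closed we have $\partial M=\emptyset$, so every boundary integral in $B$ vanishes and Theorem \ref{Reilly} collapses to $C=0$, i.e.
\[\int_M Trace\!\left(A\circ hess^2(u)\right)dvol_g-\int_M \Delta_A(u)\,\Delta u\,dvol_g+\int_M Ric_A(\nabla u,\nabla u)\,dvol_g+\tfrac12\int_M\langle\nabla u,(\Delta A)\nabla u\rangle\,dvol_g=0.\]
Let $u$ be a first eigenfunction, so $\Delta_A u=L_A u=-\lambda u$. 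Note that $div(A)=0$ here, since for a Codazzi tensor $div A=\nabla\,Trace(A)$ and $Trace(A)$ is assumed constant; this is exactly what licenses the use of Theorem \ref{Reilly}. Integrating $u\,L_A u$ by parts records the two identities I will use repeatedly: $\int_M\langle A\nabla u,\nabla u\rangle=\lambda\int_M u^2$, and, writing $\delta_1$ for the lower eigenvalue bound of $A$ from Definition \ref{minmax} (so $\langle AX,X\rangle\ge\delta_1|X|^2$), the estimate $\int_M|\nabla u|^2\le\frac{1}{\delta_1}\int_M\langle A\nabla u,\nabla u\rangle=\frac{\lambda}{\delta_1}\int_M u^2$.

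Next I would dispatch the four terms. For the Hessian term I use a Cauchy--Schwarz inequality for the $A$-weighted Frobenius pairing: writing $A=P^2$ with $P=A^{1/2}\ge0$ and $H=hess(u)$, one has $(Trace(AH))^2=(Trace(P\cdot PH))^2\le Trace(P^2)\,Trace(HP^2H)=Trace(A)\,Trace(AH^2)$, so that
\[Trace\!\left(A\circ hess^2(u)\right)\ge\frac{(\Delta_A u)^2}{Trace(A)}=\frac{\lambda^2u^2}{Trace(A)},\]
where constancy of $Trace(A)$ is precisely what lets this pass cleanly through the integral. The cross term is handled by integration by parts: $-\int_M\Delta_A u\,\Delta u=\lambda\int_M u\,\Delta u=-\lambda\int_M|\nabla u|^2$. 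The curvature pair is the crux: I claim a Weitzenb\"ock identity for a divergence-free Codazzi tensor, namely $\langle X,(\Delta A)X\rangle=Ric(X,AX)-Ric_A(X,X)$, whence
\[Ric_A(\nabla u,\nabla u)+\tfrac12\langle\nabla u,(\Delta A)\nabla u\rangle=\tfrac12\Big(Ric_A(\nabla u,\nabla u)+Ric(\nabla u,A\nabla u)\Big)\ge K|\nabla u|^2\]
by the hypothesis $Ric_A(X,X)+Ric(X,AX)\ge2K$.

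Assembling these bounds turns $C=0$ into $0\ge\frac{\lambda^2}{Trace(A)}\int_M u^2+(K-\lambda)\int_M|\nabla u|^2$, which already forces $\lambda>K$ because the first summand is strictly positive. Substituting $\int_M|\nabla u|^2\le\frac{\lambda}{\delta_1}\int_M u^2$ into the now-positive coefficient $\lambda-K$ gives $\frac{\lambda^2}{Trace(A)}\le(\lambda-K)\frac{\lambda}{\delta_1}$, i.e. $\lambda\delta_1\le(\lambda-K)Trace(A)$, and rearranging yields $\lambda\ge\frac{Trace(A)K}{Trace(A)-\delta_1}$, as claimed.

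The main obstacle is the Weitzenb\"ock identity for $\Delta A$. I would derive it from the total symmetry of $\nabla A$ (the Codazzi condition together with symmetry of $A$ makes $(\nabla A)_{kij}$ symmetric in all three indices), rewriting $\Delta A_{ij}=\nabla^k\nabla_k A_{ij}=\nabla^k\nabla_i A_{kj}$ and commuting the two derivatives; the term $\nabla_i(\nabla^k A_{kj})=\nabla_i(div A)_j$ vanishes because $Trace(A)$ is constant, while the curvature commutator should produce exactly the combination $Ric(\cdot,A\cdot)-Ric_A(\cdot,\cdot)$. The delicate points are matching the sign conventions for the curvature tensor with those implicit in the paper's definition of $Ric_A$ and of the term $\tfrac12\langle\nabla u,(\Delta A)\nabla u\rangle$ in Theorem \ref{Reilly}, and confirming $Trace(A)-\delta_1>0$ (immediate, since $\delta_1$ is the least eigenvalue of $A$, hence at most $Trace(A)/n$). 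Everything else is the same bookkeeping as in the parallel case.
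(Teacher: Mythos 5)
Your proposal is correct and follows essentially the same route as the paper: specialize the extended Reilly formula to $\partial M=\emptyset$, apply the trace Cauchy--Schwarz inequality $Trace(A\circ hess^2 u)\ge (\Delta_A u)^2/Trace(A)$ (the paper's Proposition on $Trace(AF^2)$), and convert $\langle\nabla u,(\Delta A)\nabla u\rangle$ via the identity $\langle(\Delta A)X,X\rangle=Ric(X,AX)-Ric_A(X,X)$, which the paper establishes in Lemma \ref{laplaceA10} and Corollary \ref{laplaceA} for a divergence-free Codazzi tensor exactly as you sketch. The final assembly and the use of $\int_M|\nabla u|^2\le\lambda/\delta_1$ coincide with the paper's argument.
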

Also, when $K \le 0$, we have the following result.
\begin{theorem}\label{eigennp2} Let $M$ be a closed  Riemannian manifold and $A$ be a positive semi-definite $(1,1)$-Codazzi tensor on $M$ such that $ Trace(A) $ is constant. Also, let  for each vector field $ X $ with $\left| X \right| = 1$ one has
\[Ric(X,AX) \ge  - K,\,\,\,\,\,{\delta} =\max \left\langle {X,(\nabla A )X} \right\rangle\]
and for any vector field $ X,Y ,Z$ we have
\[\left| {\left( {{\nabla _X}{\nabla _Y}A} \right)Z} \right| \le K'\left| X \right|\left| Y \right|\left| Z \right|\]
where $K, K' ,\delta\ge  0$ and $diam(M) \le d$.
Then the following estimate for the first eigenvalue of ${L_A}$ is obtained,
\[2\left( {\alpha  + \sqrt {{\alpha ^2} + (K+2K'+\delta)\alpha } } \right)\exp \left( { - 1 - \sqrt {1 + \frac{{K + 2K'+\delta}}{\alpha }} } \right) \le \lambda ,\]
where
$\alpha  = \frac{{\delta _1^2}}{{{d^2}Trace\left( A \right)}}.$
\end{theorem}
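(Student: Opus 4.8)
The plan is to adapt the Li--Yau gradient estimate exactly as in the proof of Theorem~\ref{eigenpp2}, replacing the parallel hypothesis on $A$ by quantitative control of $\nabla A$ and $\nabla^2 A$. Let $u$ be a first eigenfunction, $L_A u=-\lambda u$, normalized by scaling so that $\max_M|u|=1$ and (after possibly replacing $u$ by $-u$) so that $\max_M u=1$; since $\int_M u\,dvol_g=0$ the function $u$ changes sign, so $\min_M u\ge-1$ and $u$ vanishes somewhere. For a constant $a>1$, to be optimized at the end, put $f=\log(a-u)$, so that $a-1\le a-u\le a+1$ and a direct computation using $\operatorname{div}A=0$ gives
\[
L_A f=\frac{\lambda u}{a-u}-\langle A\nabla f,\nabla f\rangle,\qquad \langle A\nabla f,\nabla f\rangle=\frac{\langle A\nabla u,\nabla u\rangle}{(a-u)^2}.
\]
Writing $Q=\langle A\nabla f,\nabla f\rangle$, the argument is to bound $\max_M Q$ from above by a Bochner/maximum-principle argument and then to integrate the resulting gradient bound along a minimizing geodesic.

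First I would record the pointwise Bochner identity underlying Theorem~\ref{Reilly}, an expression of the schematic form
\[
\tfrac12 L_A Q=Trace\bigl(A\circ(hess\,f)^2\bigr)+\langle A\nabla f,\nabla(L_A f)\rangle+Ric_A(\nabla f,\nabla f)+E,
\]
where $E$ gathers all terms containing $\nabla A$ and $\nabla^2 A$. The two principal terms behave as in the parallel case of Theorem~\ref{eigenpp2}: a weighted Cauchy--Schwarz inequality bounds $Trace(A\circ(hess\,f)^2)$ below in terms of $(L_A f)^2$, $\delta_1$ and $Trace(A)$, and the curvature term is controlled by the hypothesis $Ric(X,AX)\ge-K$. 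The essential new step is the estimation of $E$: its first-order part is controlled by the hypothesis bound $\delta$ on $\langle X,(\nabla A)X\rangle$ and its second-order part by the hypothesis $|(\nabla_X\nabla_Y A)Z|\le K'|X||Y||Z|$; counting the occurrences of each contribution yields the effective lower curvature bound $-(K+2K'+\delta)$ in place of the bound $-K$ used in Theorem~\ref{eigenpp2}.

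With this effective bound, I would evaluate the Bochner inequality at a maximum point $x_0$ of $Q$, where $\nabla Q=0$ and $L_A Q\le0$ (positive semidefiniteness of $A$ enters here, since $hess\,Q\le0$ at $x_0$). Substituting $L_A f=\lambda u/(a-u)-Q$ turns the inequality into an algebraic one bounding $\max_M Q$ by a quadratic expression in $\lambda$, $a$ and $K+2K'+\delta$. Converting $Q$ into a bound for $\max_M|\nabla f|$ through the eigenvalue bounds of $A$, and integrating $|\nabla f|$ along a minimizing geodesic joining a zero of $u$ to a maximum point, gives
\[
\log\frac{a}{a-1}\le d\,\max_M|\nabla f|,
\]
since $diam(M)\le d$. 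Combining this with the gradient bound produces a lower bound for $\lambda$ depending on $a$; optimizing over $a>1$ (the optimum governed by $s=\sqrt{1+(K+2K'+\delta)/\alpha}$, with $\alpha=\delta_1^2/(d^2 Trace(A))$ exactly as in Theorem~\ref{eigenpp2}) yields the asserted estimate, which may be written $\lambda\ge2\alpha(1+s)e^{-1-s}$.

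I expect the main obstacle to be the bookkeeping of the error term $E$. Unlike the parallel case, commuting covariant derivatives on $\nabla f$ and differentiating $A$ now generate terms of type $(\nabla_{\nabla f}A)(hess\,f)$ and $(\nabla^2 A)(\nabla f,\nabla f)$; one must use the Codazzi symmetry $(\nabla_X A)Y=(\nabla_Y A)X$ to collapse these into precisely two $K'$-controlled and one $\delta$-controlled contribution, so that the effective constant is exactly $K+2K'+\delta$ and not something larger. A secondary technical point is to justify the maximum principle for $L_A$ and to track the $A$-weighting carefully, so that the parameter $\alpha$ emerges as $\delta_1^2/(d^2 Trace(A))$ rather than with other powers of the eigenvalue bounds of $A$.
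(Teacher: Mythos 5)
Your overall strategy is exactly the paper's: adapt the Li--Yau gradient estimate for $v=\ln(a+u)$ (your $f=\log(a-u)$ is the same up to a sign convention), bound $Q$ by the maximum principle, integrate $\sqrt{Q}$ along a minimizing geodesic, and optimize over $a$, with $\alpha=\delta_1^2/(d^2\,Trace(A))$ emerging as in Theorem \ref{eigenpp2}. The issue is not the route but the one step you defer.

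The genuine gap is in the treatment of the curvature term. The extended Bochner formula (Theorem \ref{extendedBochner}) produces $Ric_A(\nabla v,\nabla v)$, where $Ric_A(X,Y)=\sum_i\langle R(X,Ae_i)e_i,Y\rangle$; the hypothesis of the theorem only bounds $Ric(X,AX)=\sum_i\langle R(X,e_i)e_i,AX\rangle$, and these are different tensors for a non-parallel self-adjoint $A$. Your plan states that ``the curvature term is controlled by the hypothesis $Ric(X,AX)\ge-K$,'' which is not true as written. The mechanism that makes the hypothesis usable is an exact identity: combining Proposition \ref{lcomplicated} with Lemma \ref{laplaceA10} (applicable because $T^A=0$ for a Codazzi tensor, hence $\nabla^*T^A=0$, and $divA=\nabla Trace(A)=0$) gives
\[
\Delta_{(\nabla_{\nabla u}A)}u \;=\; Ric_A(\nabla u,\nabla u)-Ric(\nabla u,A\nabla u)+\sum\nolimits_i\bigl\langle T^{(\nabla_{\nabla u}A)}(e_i,\nabla u),e_i\bigr\rangle,
\]
so that when $-\Delta_{(\nabla_{\nabla u}A)}u$ is inserted into the Bochner formula the $Ric_A$ term cancels identically and is replaced by $Ric(\nabla u,A\nabla u)$ plus a torsion term bounded by $2K'|\nabla u|^2$ (since $T^{(\nabla_{\nabla u}A)}$ contributes two second covariant derivatives of $A$). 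Moreover $\delta$ does not come from this error term at all: it enters separately in the drift computation $\nabla v.\langle A\nabla v,\nabla v\rangle=2\,Hess\,v(A\nabla v,\nabla v)+\langle\nabla v,(\nabla_{\nabla v}A)\nabla v\rangle\le (A\nabla v).Q+\delta Q$, which replaces the clean identity available when $A$ is parallel. You flag the ``bookkeeping of $E$'' as the main obstacle and assert the answer $K+2K'+\delta$, but without the cancellation identity above the hypothesis on $Ric(\cdot,A\cdot)$ never connects to the Bochner formula, so the asserted constant is unsupported. Supplying Lemma \ref{laplaceA10} and Proposition \ref{lcomplicated} (or an equivalent computation) closes the gap; the rest of your outline then proceeds as in the paper.
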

Finally, for more general case, we have the following result, when $K > 0$.
\begin{theorem}\label{more general}Let $ B $ be a $(1,1)$-self-adjoint tensor field and satisfies in following conditions
\begin{itemize}
\item[a)]$div(B) = 0$,

\item[b)]${\nabla ^2}B(X,Y)Z \le K'\left| X \right|\left| Y \right|\left| Z \right|$ for any vector field $ X,Y,Z $,

\item[c)]$Ric\left( {X,BX} \right) \ge K{\left| X \right|^2}$,

\item[d)]$\nabla Trace(B)$ is parallel.
\end{itemize}
Then the following estimate is obtained for the first eigenvalue of ${L_B}$,
\[\lambda  \ge \frac{{n{\delta _n}\left( {K + 2nK'} \right)}}{{n{\delta _n} - {\delta _1}}}.\]
\end{theorem}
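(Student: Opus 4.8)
The plan is to adapt the closed-manifold (integrated) Bochner argument used for Theorems \ref{eigen1} and \ref{eigennp1} to the present non-Codazzi setting, keeping careful track of the extra terms that the failure of the Codazzi identity produces. Let $u$ be a first eigenfunction of $L_B$, so that $L_B u=-\lambda u$ and $\int_M u\,dvol_g=0$. Since $M$ is closed there is no boundary contribution, so the whole extended Reilly identity collapses to an interior integral identity; the task is to bound this identity from below in terms of $\lambda$, $\delta_1$, $\delta_n$ and the constants $K,K'$.

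First I would derive the integrated Bochner identity for a self-adjoint divergence-free $B$. Applying $L_B$ to $\tfrac12|\nabla u|^2$ and integrating over the closed manifold, the divergence term integrates to zero, and hypothesis (a), $div(B)=0$, kills the first-order contribution coming from $\nabla B$. This yields
\[0=\int_M Trace\big(B\circ hess^2(u)\big)\,dvol_g-\int_M (L_B u)(\Delta u)\,dvol_g+\int_M Ric_B(\nabla u,\nabla u)\,dvol_g+\mathcal{E},\]
where $Ric_B$ is the contraction of the curvature tensor against $B$ that reduces to the ordinary Ricci tensor when $B=g$, and $\mathcal{E}$ gathers every term carrying $\nabla B$ or $\nabla^2 B$. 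For a parallel tensor $\mathcal{E}$ collapses to the single term $\tfrac12\int_M\langle\nabla u,(\Delta B)\nabla u\rangle\,dvol_g$ of Theorem \ref{Reilly}, but here it does not, and handling $\mathcal{E}$ is where hypotheses (b) and (d) enter: integrating by parts moves derivatives off $u$ onto $B$, producing contractions of $\nabla^2 B$ bounded pointwise by condition (b), $\nabla^2 B(X,Y)Z\le K'|X||Y||Z|$, while condition (d), the parallelism of $\nabla Trace(B)$, forces the terms involving $\Delta Trace(B)$ and $\nabla Trace(B)$ to drop out. Tracing the $\nabla^2 B$ bound over an orthonormal frame contributes the factor $n$, and combining this with the Ricci hypothesis (c), $Ric(X,BX)\ge K|X|^2$, the curvature-plus-error block is to be estimated as
\[\int_M Ric_B(\nabla u,\nabla u)\,dvol_g+\mathcal{E}\ \ge\ (K+2nK')\int_M|\nabla u|^2\,dvol_g.\]

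With the geometric input in place, the eigenvalue is extracted by purely algebraic inequalities, exactly as in Theorem \ref{eigennp1}. Because $div(B)=0$ one has $L_B u=Trace\big(B\circ hess(u)\big)$, so the matrix Cauchy--Schwarz inequality gives the pointwise bound $(L_B u)^2\le Trace(B)\,Trace\big(B\circ hess^2(u)\big)$; since the eigenvalues of $B$ lie below $\delta_n$ we get $Trace(B)\le n\delta_n$ and hence
\[\int_M Trace\big(B\circ hess^2(u)\big)\,dvol_g\ \ge\ \frac{1}{n\delta_n}\int_M (L_B u)^2\,dvol_g=\frac{\lambda^2}{n\delta_n}\int_M u^2\,dvol_g.\]
Integration by parts gives $\int_M\langle B\nabla u,\nabla u\rangle\,dvol_g=\lambda\int_M u^2\,dvol_g$, and since the eigenvalues of $B$ exceed $\delta_1$ this yields $\int_M|\nabla u|^2\,dvol_g\le\frac{\lambda}{\delta_1}\int_M u^2\,dvol_g$; likewise $-\int_M(L_B u)(\Delta u)\,dvol_g=-\lambda\int_M|\nabla u|^2\,dvol_g$. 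Substituting these three estimates into the integrated identity produces
\[\big(\lambda-(K+2nK')\big)\int_M|\nabla u|^2\,dvol_g\ \ge\ \frac{\lambda^2}{n\delta_n}\int_M u^2\,dvol_g,\]
and inserting $\int_M|\nabla u|^2\,dvol_g\le\frac{\lambda}{\delta_1}\int_M u^2\,dvol_g$, then dividing by $\lambda\int_M u^2\,dvol_g$ and clearing denominators, rearranges to the claimed bound $\lambda\ge\frac{n\delta_n(K+2nK')}{n\delta_n-\delta_1}$.

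The main obstacle is the second paragraph: the Bochner formula naturally produces the curvature contraction $Ric_B$ (the full Riemann tensor traced against $B$ on its two interior slots), whereas hypothesis (c) controls the different quantity $Ric(X,BX)$ (the ordinary Ricci tensor post-composed with $B$). Reconciling the two requires the Ricci (second-order commutation) identity for the tensor $B$ together with $div(B)=0$, and it is exactly the commutator terms $\nabla^2 B$ generated in this reconciliation—together with those coming from integrating $\mathcal{E}$ by parts—that must be controlled by condition (b) to yield the effective constant $K+2nK'$. Getting the trace combinatorics and the signs of these correction terms to land precisely on $+2nK'$, rather than on a quantity of indeterminate sign, is the delicate point; here the one-sided form of (b) is what fixes the sign, while condition (d) guarantees that the leftover $Trace(B)$-derivative terms contribute nothing.
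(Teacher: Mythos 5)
Your outline tracks the paper's own argument essentially step for step: integrate the extended Bochner formula (Theorem \ref{extendedBochner}) over the closed manifold, trade $Ric_B(\nabla u,\nabla u)$ and the term $\Delta_{(\nabla_{\nabla u}B)}u$ for $Ric(\nabla u,B\nabla u)$ plus second-derivative corrections of $B$ --- which is precisely what the paper's Lemma \ref{laplaceA10} and Proposition \ref{lcomplicated} accomplish, using $div(B)=0$ and hypothesis (d) to dispose of the $\nabla u.\nabla u.Trace(B)$ term --- then bound $Trace\big(B\circ hess^2(u)\big)$ below by $(\Delta_B u)^2/(n\delta_n)$ via inequality (\ref{newton}) and close with the Rayleigh bounds $\lambda/\delta_n\le\int_M|\nabla u|^2\,dvol_g\le\lambda/\delta_1$. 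The one block you leave as a sketch, the estimate $\int_M Ric_B(\nabla u,\nabla u)\,dvol_g+\mathcal{E}\ge(K+2nK')\int_M|\nabla u|^2\,dvol_g$, is exactly the content of those two auxiliary results: the commutation identity with $div(B)=0$ converts $Ric_B$ into $Ric(\cdot,B\cdot)$ up to $\Delta B$, the term $\sum_i e_i.\langle\nabla u,T^B(e_i,\nabla u)\rangle$ integrates to zero because it is a divergence, and the remaining $T^{(\nabla_{\nabla u}B)}$ contraction is absorbed by hypothesis (b); so there is no divergence of method, only of how much detail is written out. The one substantive difference lies in the final algebra: you keep the coefficient $\lambda-(K+2nK')$ together and apply the single bound $\int_M|\nabla u|^2\,dvol_g\le(\lambda/\delta_1)\int_M u^2\,dvol_g$ to it (legitimate, since positivity of that coefficient is forced by the inequality itself), which lands exactly on the stated constant $n\delta_n(K+2nK')/(n\delta_n-\delta_1)$; the paper instead estimates the $(K+2nK')$-term separately with $\int_M|\nabla u|^2\,dvol_g\ge\lambda/\delta_n$, and its final display literally yields $n\delta_1$ rather than $n\delta_n$ in the numerator, so your bookkeeping is the one that actually matches the theorem as stated.
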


\section{Preliminaries}
In this section, we summarize some preliminaries that we use in throughout paper.
\begin{definition} A $(1,1)$-tensor field $ A $ on a Riemannian manifold $M,\langle,\rangle)$ is  self-adjoint whenever
\[\forall X,Y \in \Gamma \left( {TM} \right):\left\langle {AX,Y} \right\rangle  = \left\langle {X,AY} \right\rangle. \]
\end{definition}
\begin{definition}\label{minmax}
Let $  A $ be a self-adjoint positive definite (1,1)-tensor field on $ M $, we say $ A $ is bounded if there are some constant $\alpha ,\beta  > 0$ such that for any vector field $X \in \Gamma (TM)$ on $ M $ with $\left| X \right| = 1$, one has $\alpha  < \left\langle {X,AX} \right\rangle  < \beta$ and ${\delta _1},{\delta _n}$ are defined as follows
\begin{itemize}
\item[a)] ${\delta _1} = \mathop {\min }\limits_{\left| X \right| = 1} \left\langle {X,AX} \right\rangle, $
\item[b)] ${\delta _n} = \mathop {\max }\limits_{\left| X \right| = 1} \left\langle {X,AX} \right\rangle. $
\end{itemize}
Note that when $  A $ is parallel, then $ \left\langle {\nabla r,A\nabla r} \right\rangle $ is constant with respect to distant function \linebreak $r(x) = dist(p,x)$, in other words $\frac{\partial }{{\partial r}}\left\langle {\nabla r,A\nabla r} \right\rangle  = 0$. So $ {\delta _1} = \mathop {\min }\limits_{B\left( {p,\varepsilon } \right)} \left\langle {\nabla r,A\nabla r} \right\rangle  $ and \linebreak $ {\delta _n} = \mathop {\max }\limits_{B\left( {p,\varepsilon } \right)} \left\langle {\nabla r,A\nabla r} \right\rangle $ which $ \varepsilon>0$ is arbitrary.
\end{definition}
\begin{definition} \label{ricci}
Let $A $ be a self-adjoint operator on manifold $M$ and $\left\{ {{e_i}} \right\}$ be an orthonormal basis at the computing point. We define $L_{A}, \,\,\Delta_{A}$ and $Ric_{A}$ as follow
\begin{itemize}
\item[a)]$ {L_A}(u): = div\left( {A\nabla u} \right) = \sum\nolimits_i {\left\langle {{\nabla _{{e_i}}}\left( {A\nabla u} \right),{e_i}} \right\rangle }  $,
\item[b)]$ {\Delta _A}(u): = \sum\nolimits_i {\left\langle {{\nabla _{{e_i}}}\nabla u,A{e_i}} \right\rangle } $,
\item[c)]$Ri{c_A}(X,Y): = \sum\nolimits_i {\left\langle {R(X,A{e_i}){e_i},Y} \right\rangle }$ and we call the tensor $Ri{c_A}$ as an extended Ricci tensor,
\end{itemize}
where $u$ is a smooth function and $X,Y$ are vector fields on $M$.
\end{definition}
As usual for comparison results in differential geometry one needs a Bochner formula and the associated Riccati inequality. The following Theorem provided this.
\begin{theorem}[\textbf{Extended Bochner formula}]\cite{gomes2016eigenvalue}\label{extendedBochner}
Let $M$ be a smooth Riemannian manifold and $A$ be a self-adjoint operator on $M$, then for any smooth function $u$ on $M$, we have,
\begin{eqnarray}\nonumber
\frac{1}{2}{L_A}(\left| \nabla u \right|^2)&=&\frac{1}{2}\langle \nabla{\left| {\nabla u} \right|}^2 div(A)\rangle
+ Trace\left( A \circ hes{s^2}\left( u \right)\right) +\langle \nabla u,\nabla ({\Delta _A}u)\rangle\\\label{b0}&&
   -\Delta _{\left( \nabla _{\nabla u}A \right)}u + Ri{c_A}(\nabla u,\nabla u).
\end{eqnarray}
\end{theorem}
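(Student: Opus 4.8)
The plan is to mimic the proof of the classical Bochner identity, carrying the operator $A$ through each step, and to work pointwise with an orthonormal frame $\{e_i\}$ that is normal at the point of computation (so that $\nabla_{e_i}e_j$ and the brackets $[e_i,e_j]$ vanish there). First I would expand the left-hand side straight from the definition of $L_A$,
\[\frac{1}{2}L_A(|\nabla u|^2)=\frac{1}{2}\sum_i\langle\nabla_{e_i}(A\nabla|\nabla u|^2),e_i\rangle,\]
and apply the Leibniz rule to $A\nabla|\nabla u|^2$ to split the sum into a piece where the derivative lands on $A$ and a piece where it lands on $\nabla|\nabla u|^2$:
\[\frac{1}{2}\sum_i\langle(\nabla_{e_i}A)\nabla|\nabla u|^2,e_i\rangle+\frac{1}{2}\sum_i\langle A\,\nabla_{e_i}\nabla|\nabla u|^2,e_i\rangle.\]

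For the first piece I would use that $\nabla_{e_i}A$ is again self-adjoint together with $div(A)=\sum_i(\nabla_{e_i}A)e_i$; this collapses the term to $\tfrac{1}{2}\langle\nabla|\nabla u|^2,div(A)\rangle$, which is the first term on the right. For the second piece I would substitute the elementary identity $\tfrac12\nabla|\nabla u|^2=\nabla_{\nabla u}\nabla u$ and differentiate once more. Separating the result into the terms where both derivatives act on $\nabla u$ and those where the outer derivative acts after the inner Hessian yields, respectively, the curvature-free quadratic term $Trace(A\circ hess^2(u))$ and a genuine third-order term $T$ built from the third covariant derivatives of $u$ contracted against $A$ and $\nabla u$ (in frame components, $T=\sum_{i,m,j}A_{im}\,u_j\,\langle\nabla_{e_i}\nabla_{e_m}\nabla u,e_j\rangle$).

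The decisive step is to identify $T$ with the remaining three right-hand terms of \eqref{b0}. I would compute $\langle\nabla u,\nabla(\Delta_A u)\rangle$ and $\Delta_{(\nabla_{\nabla u}A)}u$ directly from Definition \ref{ricci}; differentiating $\Delta_A u$ produces a contribution in which the derivative hits $A$, and this is exactly cancelled by $\Delta_{(\nabla_{\nabla u}A)}u$, leaving a third-order expression that agrees with $T$ apart from the order of the covariant derivatives. Controlling this reordering is the heart of the argument: commuting the outer derivative past the Hessian slot introduces the Riemann tensor through the Ricci identity
\[\langle\nabla_{e_a}\nabla_{e_c}\nabla u-\nabla_{e_c}\nabla_{e_a}\nabla u,e_b\rangle=\langle R(e_a,e_c)\nabla u,e_b\rangle,\]
and the remaining work is to check, using the pair symmetry and the antisymmetries of $R$, that the contraction of this curvature term against $A$ and $\nabla u$ is precisely $Ric_A(\nabla u,\nabla u)$ in the sense of Definition \ref{ricci}. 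Assembling the four pieces then gives $B=C$ as stated. I expect this final curvature-matching bookkeeping to be the main obstacle, since the index contractions involving $A$ must be tracked carefully so that the commutator lands on the extended Ricci tensor $Ric_A$ rather than on some a priori different contraction of $R$ with $A$.
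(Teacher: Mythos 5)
Your derivation is correct, and all the key identifications check out: the divergence term comes from the self-adjointness of $\nabla_{e_i}A$, the quadratic term from the part of $\nabla_{e_i}\nabla_{\nabla u}\nabla u$ where the derivative falls on the coefficients $u_m$, the cancellation $\langle\nabla u,\nabla(\Delta_A u)\rangle-\Delta_{(\nabla_{\nabla u}A)}u=\sum_i\langle\nabla^2_{\nabla u,e_i}\nabla u,Ae_i\rangle$ holds exactly as you describe, and the Ricci-identity commutator contracts to $Ric_A(\nabla u,\nabla u)$ via the pair symmetry of $R$ (your frame expression for $T$ also agrees with the invariant one, thanks to the symmetry of the Hessian and of $A$). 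Note, however, that the paper itself offers no proof of this theorem; it is imported from the reference of Gomes and Miranda, so there is no in-paper argument to compare against, and your sketch is simply the standard Bochner computation carried through with $A$, which is presumably what that reference does.
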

In the following proposition, we provide a generalization of Cauchy-Schwartz inequality. By this result, we can get the so-called Riccati inequality to the extended Bochner formula in Theorem \ref{extendedBochner}  in a similar way.
\begin{proposition}\cite{alencar2015eigenvalue}Let $ A $ be a positive semi-definite symmetric matrix, then for every matrix $ F $ we have,
\begin{equation}
Trace\left( {A{F^2}} \right) \ge \frac{1}{{Trace(A)}}{\left( {Trace(AF)} \right)^2}.
\label{newton}
\end{equation}
and the equality holds if and only if $A = \alpha I$ for some $\alpha  \in \mathbb{R}$.
\end{proposition}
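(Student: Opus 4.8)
The plan is to recognize inequality (\ref{newton}) as nothing more than the Cauchy--Schwarz inequality for the Frobenius (Hilbert--Schmidt) inner product $\langle X,Y\rangle = Trace(X^{T}Y)$ on the space of matrices, with the two occurrences of $A$ in $Trace(AF^{2})$ symmetrically distributed. Since $A$ is symmetric and positive semi-definite it admits a unique symmetric positive semi-definite square root $B = A^{1/2}$, and I would use $B$ to perform that splitting. Before starting I would record one clarification: the estimate is genuinely meant for \emph{symmetric} $F$, which is exactly the case needed in the applications where $F = hess(u)$. For a general non-symmetric $F$ the left-hand side $Trace(AF^{2})$ need not even be nonnegative (take $A=I$ and $F$ skew with $F^{2}=-I$), so the claim must be read with $F$ symmetric.

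With $B = A^{1/2}$ I would compute the three relevant inner products,
\[
\langle B, BF\rangle = Trace(B^{2}F) = Trace(AF), \qquad \langle B,B\rangle = Trace(B^{2}) = Trace(A),
\]
and, using $B^{T}=B$ together with $F^{T}=F$,
\[
\langle BF, BF\rangle = Trace(F^{T}B^{2}F) = Trace(FAF) = Trace(AF^{2}).
\]
The Cauchy--Schwarz inequality $\langle B, BF\rangle^{2} \le \langle B,B\rangle\,\langle BF,BF\rangle$ then reads $(Trace(AF))^{2} \le Trace(A)\,Trace(AF^{2})$, which is precisely (\ref{newton}) after dividing by $Trace(A) > 0$ (the degenerate case $Trace(A)=0$ forces $A=0$ and is trivial). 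An alternative route I would keep in reserve, because it makes the equality analysis transparent, is to diagonalize $A = diag(a_{1},\dots,a_{n})$ with $a_{i}\ge 0$, write $Trace(AF^{2}) = \sum_{i,j} a_{i}F_{ij}^{2}$, discard the nonnegative off-diagonal terms to obtain $\sum_{i} a_{i}F_{ii}^{2}$, and finish with the weighted scalar Cauchy--Schwarz $\bigl(\sum_{i} a_{i}F_{ii}\bigr)^{2} \le \bigl(\sum_{i} a_{i}\bigr)\bigl(\sum_{i} a_{i}F_{ii}^{2}\bigr)$.

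The step I expect to require the most care is the equality case. Equality in Cauchy--Schwarz holds exactly when $B$ and $BF$ are linearly dependent, i.e. $A^{1/2}(F - \lambda I) = 0$ for some $\lambda \in \mathbb{R}$; in the diagonal picture this says $F_{ij}=0$ for $i\neq j$ and $F_{ii}=\lambda$ on the support of $A$. Thus the honest equality condition couples $A$ and $F$ rather than determining $A$ alone, and the clean statement ``equality iff $A=\alpha I$'' should be read in the setting in which the proposition is used: there $F = hess(u)$ is not a multiple of the identity, and demanding that the entire chain of estimates in the eigenvalue argument be tight is what forces $A^{1/2}$, hence $A$, to be scalar. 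Reconciling this abstract equality characterization with the stated geometric one is, I anticipate, the only delicate point; the inequality itself is a one-line consequence of Cauchy--Schwarz once the square root $A^{1/2}$ is introduced.
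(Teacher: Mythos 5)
Your proof is correct, but note that the paper itself offers no proof of this proposition: it is quoted verbatim from the reference \cite{alencar2015eigenvalue}, so there is no in-paper argument to compare against. Your route --- splitting $Trace(AF^2)$ via the symmetric square root $B=A^{1/2}$ and applying Cauchy--Schwarz for the Frobenius inner product, with the diagonalization argument held in reserve --- is the standard proof and almost certainly the one in the cited source. Two of your side remarks are genuinely valuable and go beyond what the paper records. First, you are right that the inequality as stated for ``every matrix $F$'' is false without symmetry (your example $A=I$, $F$ skew with $F^2=-I$ gives a negative left-hand side), and that the identity $Trace(F^TAF)=Trace(AF^2)$ used in the Cauchy--Schwarz step requires $F^T=F$; this is harmless here since the paper only ever applies the result with $F=hess(u)$, but the hypothesis should be stated. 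Second, your analysis of the equality case is more honest than the proposition's: equality in Cauchy--Schwarz gives $A^{1/2}(F-\lambda I)=0$, a condition coupling $A$ and $F$, and it is only the full chain of estimates in the eigenvalue argument (where $hess(u)$ cannot be scalar unless further rigidity holds) that forces $A=\alpha I$; the bare ``equality iff $A=\alpha I$'' is not literally the equality condition of \eqref{newton}. No gaps; if anything, you have repaired two imprecisions in the statement you were asked to prove.
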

\begin{definition}
Let $(M,\langle ,\rangle)$ be a Riemannian manifold and $A$ be a $ (1,1)$-tensor field. We say that the tensor $A$  is a Codazzi Tensor if
 $(\nabla _{X}A)\langle Y,Z\rangle=(\nabla _{Y}A)\langle X,Z\rangle$.
\end{definition}

\begin{definition} Let $ A $ be a $ (1,1)$-tensor field on manifold $ M $, then we define $T^{A}$ as follows
  \[{T^A}(X,Y): = \left( {{\nabla _X}A} \right)Y - \left( {{\nabla _Y}A} \right)X.\]
Notice,   $T$ is a $(2,1)$ tensor field.
\end{definition}
\begin{example}
If $A$ is the shape operator of the hypersurface ${\Sigma ^n} \subset {M^{n + 1}}$ then \[{T^A}(X,Y) = {\left( {\bar R(Y,X)N} \right)^T},\]
where ${\bar R}$ is the curvature tensor on the ambient manifold $ M $, and $ N $ is the unit normal vector field on ${\Sigma ^n} \subset {M^{n + 1}}$.
\end{example}
We compute the second covariant derivation of the operator $ A $. This lemma is useful for computation of the tensor $Ri{c_A}$ and its relation with other geometric quantities like Laplace of the tensor $ A $ and the Ricci tensor.
\begin{lemma}\label{laplaceA1} Let $ A $ be a self-adjoint operator on manifold $ M $ and $ X,Y,Z $  are vector fields on $ M $, then
\begin{itemize}
\item[a)]$\left( {{\nabla ^2}A} \right)\left( {X,Y,Z} \right) = \left( {{\nabla ^2}A} \right)\left( {X,Z,Y} \right) + R(Z,Y)\left( {AX} \right) - A\left( {R(Z,Y)X} \right),$
\item[b)]$\left( {{\nabla ^2}A} \right)\left( {X,Y,Z} \right) - \left( {{\nabla ^2}A} \right)\left( {Y,X,Z} \right) = \left( {{\nabla _Z}T^A} \right)(X,Y).$
\end{itemize}
\end{lemma}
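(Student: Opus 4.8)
The plan is to read both identities as instances of the Ricci commutation formula for the second covariant derivative of the endomorphism-valued tensor $A$, tested against the definition of $T^{A}$. Throughout I adopt the convention that $(\nabla^{2}A)(X,Y,Z)$ is the value $(\nabla^{2}_{Z,Y}A)(X)$, so that $X$ is the argument of the endomorphism while $Y$ and $Z$ are the two differentiation directions ($Z$ the outer one); this is the only slot convention that makes the antisymmetrization in the last two slots in part (a) produce a clean curvature term. All computations I would carry out at a fixed point $p$, extending $X,Y,Z$ to vector fields whose covariant derivatives vanish at $p$ (a synchronous frame). This kills every correction term of the form $\nabla_{\nabla_{\bullet}\bullet}$ and reduces each second covariant derivative to an iterated ordinary covariant derivative at $p$, which is what keeps the bookkeeping tractable.

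For part (a), I would begin from the product rule $(\nabla_{W}A)(X)=\nabla_{W}(AX)-A(\nabla_{W}X)$ and apply a second covariant derivative in the direction $Z$. Evaluating at $p$, the terms carrying a single first derivative of the frame drop out, and one is left with $(\nabla^{2}A)(X,Y,Z)\big|_{p}=\nabla_{Z}\nabla_{Y}(AX)\big|_{p}-A\!\left(\nabla_{Z}\nabla_{Y}X\right)\big|_{p}$. Subtracting the same expression with $Y$ and $Z$ interchanged, the first pieces assemble into the curvature operator acting on $AX$, namely $R(Z,Y)(AX)$, while the pieces in which $A$ is held outside and both derivatives fall on $X$ assemble into $-A\!\left(R(Z,Y)X\right)$ (here $[Z,Y]_{p}=0$, so the bracket term is what completes each pair of second derivatives into an honest curvature operator). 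Conceptually this is just the statement that, since $A\in\Gamma(\mathrm{End}(TM))\cong\Gamma(TM\otimes T^{*}M)$, the commutator $[\nabla_{Z},\nabla_{Y}]-\nabla_{[Z,Y]}=R(Z,Y)$ acts as a derivation, hitting the $TM$-factor with a plus sign and the $T^{*}M$-factor with a minus sign; the only point needing care is whether $R(Z,Y)$ or $R(Y,Z)$ appears, which is fixed by deciding which of the last two slots is the outer derivative.

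For part (b), I would antisymmetrize instead in the first two slots $X\leftrightarrow Y$. Because the outer differentiation direction $Z$ is untouched by this swap, it factors out, and at $p$ the difference collapses to $\nabla_{Z}$ applied to $(\nabla_{Y}A)(X)-(\nabla_{X}A)(Y)$. By the definition $T^{A}(X,Y)=(\nabla_{X}A)Y-(\nabla_{Y}A)X$ this inner expression is $\pm T^{A}(X,Y)$, and reassembling the $Z$-derivative into a covariant derivative of the $(1,2)$-tensor $T^{A}$ (once more the frame correction terms vanish at $p$) gives $(\nabla_{Z}T^{A})(X,Y)$; the antisymmetry $T^{A}(X,Y)=-T^{A}(Y,X)$ is what reconciles the sign and the order of the arguments. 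In contrast to (a), no curvature survives here: antisymmetrizing a second derivative in the argument slot and one derivative slot is a purely first-order operation once the spectator derivative is pulled outside.

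The individual manipulations are routine, so I do not expect a genuine analytic obstacle. The real difficulty is purely organizational, namely keeping the three slots straight and pinning down the curvature and differentiation sign conventions so that the right-hand sides emerge with precisely the stated signs and argument orders. Computing in the synchronous frame at $p$ is the device that controls this, and the antisymmetry of both $R(\cdot,\cdot)$ and $T^{A}$ is what ultimately fixes the remaining sign ambiguities.
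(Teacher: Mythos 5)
Your argument is correct and takes essentially the same route as the paper: both identities follow from expanding the second covariant derivative and antisymmetrizing, with the Ricci commutation formula (curvature acting as a derivation, so $\bigl(R(Z,Y)A\bigr)X=R(Z,Y)(AX)-A\bigl(R(Z,Y)X\bigr)$) giving (a) and the definition of $T^{A}$ giving (b). Your synchronous-frame-at-a-point device merely streamlines the explicit cancellation of the $(\nabla A)(\nabla_{Z}X,Y)$-type correction terms that the paper carries through by hand, and the sign/argument-order ambiguity you flag in (b) is present in the paper's own computation as well (its line $(\nabla A)(X,Y)=(\nabla A)(Y,X)+T^{A}(X,Y)$ is off by a sign against the stated definition of $T^{A}$), so your appeal to the antisymmetry of $T^{A}$ is the right resolution.
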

\begin{proof}
For part (a) we have,
\begin{eqnarray*}
\nabla ^{2}A(X,Y,Z) &=&\left( \nabla \left(\nabla A \right) \right)(X,Y,Z) = \left( \nabla _{Z}\left( \nabla A \right) \right)(X,Y)  \\
  & =& \nabla_{Z}\left( \left( \nabla A \right)(X,Y) \right) - \left( \nabla A \right)(\nabla_{Z}X,Y) - \left( \nabla A \right)(X,\nabla_{Z}Y) \\
   &=& \nabla_{Z}\left(\left( \nabla_{Y}A \right)X \right) - \left( \nabla A \right)(\nabla_{Z}X,Y) - \left( \nabla_{\nabla_{Z}Y}A \right)(X)  \\
     &=& \left( \nabla_{Z}\left( \nabla_{Y}A \right) \right)X + \left( \nabla_{Y}A \right)\left( \nabla_{Z}X \right) - \left( \nabla_{Y}A \right)(\nabla_{Z}X) - \left( \nabla _{{\nabla _Z}Y}A \right)(X)\\
 &=&\left( {\nabla _Z}\left( {{\nabla _Y}A} \right) \right)X - \left( \nabla _{{\nabla _Z}Y}A \right)X.
\end{eqnarray*}
Similarly,
\[{\nabla ^2}A(X,Z,Y) = \left( {{\nabla _Y}\left( {{\nabla _Z}A} \right)} \right)X - \left( {{\nabla _{{\nabla _Y}Z}}A} \right)X.\]
 Thus
\[\begin{array}{*{20}{c}}
   {{\nabla ^2}A(X,Y,Z) - {\nabla ^2}A(X,Z,Y){\rm{ }}} \hfill & { = \left( {{\nabla _Z}{\nabla _Y}A} \right)X - \left( {{\nabla _Y}\left( {{\nabla _Z}A} \right)} \right)X - \left( {{\nabla _{\left[ {Z,Y} \right]}}A} \right)X} \hfill  \\
   {} \hfill & { = \left( {R(Z,Y)A} \right)X = R(Z,Y)\left( {AX} \right) - A\left( {\left( {R(Z,Y)X} \right)} \right).} \hfill  \\
\end{array}\]
For part (b), by definition of $ T $, we have
\begin{eqnarray*}
{\nabla ^2}A(X,Y,Z)&=& \left( {{\nabla _Z}\left( {\nabla A} \right)} \right)\left( {X,Y} \right)
\\&=& {\nabla _Z}\left( {\left( {\nabla A} \right)\left( {X,Y} \right)} \right) - \left( {\nabla A} \right)\left( {{\nabla _Z}X,Y} \right) - \left( {\nabla A} \right)\left( {X,{\nabla _Z}Y} \right)
\\&=& {\nabla _Z}\left( {\left( {\nabla A} \right)\left( {Y,X} \right) + {T^A}\left( {X,Y} \right)} \right) - \left( {\nabla A} \right)\left( {{\nabla _Z}X,Y} \right) - \left( {\nabla A} \right)\left( {X,{\nabla _Z}Y} \right)
\\&=&{\nabla _Z}\left( {\left( {\nabla A} \right)\left( {Y,X} \right)} \right) + {\nabla _Z}\left( {{T^A}\left( {X,Y} \right)} \right) - \left( {\nabla A} \right)\left( {{\nabla _Z}X,Y} \right)
 \\&&- \left( {\nabla A} \right)\left( {X,{\nabla _Z}Y} \right)
\\&=&\left( {{\nabla _Z}\left( {\nabla A} \right)\left( {Y,X} \right)} \right) + \left( {\nabla A} \right)\left( {{\nabla _Z}Y,X} \right) + \left( {\nabla A} \right)\left( {Y,{\nabla _Z}X} \right)
\\&&+ {\nabla _Z}\left( {{T^A}\left( {X,Y} \right)} \right) - \left( {\nabla A} \right)\left( {{\nabla _Z}X,Y} \right) - \left( {\nabla A} \right)\left( {X,{\nabla _Z}Y} \right)
\\&=& \left( {{\nabla _Z}\left( {\nabla A} \right)\left( {Y,X} \right)} \right) + {\nabla _Z}\left( {{T^A}\left( {X,Y} \right)} \right) + {T^A}\left( {{\nabla _Z}Y,X} \right) + {T^A}\left( {Y,{\nabla _Z}X} \right)
\\&=&\left( {{\nabla _Z}\left( {\nabla A} \right)\left( {Y,X} \right)} \right) + \left( {{\nabla _Z}{T^A}} \right)\left( {X,Y} \right).
\end{eqnarray*}
\end{proof}
\begin{lemma}\label{laplaceA10} Let $ A $ be a $ (1,1)-$symmetric tensor field and ${\nabla ^*}{T^A} = 0$, then
\[\left\langle {\left( {\Delta A} \right)X,X} \right\rangle  = \left\langle {{\nabla _X}div(A),X} \right\rangle  - Ri{c_A}(X,X) + Ric(X,AX),\]
where $\nabla ^*$ is adjoint of $\nabla$.
\end{lemma}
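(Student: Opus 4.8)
The plan is to compute $\langle(\Delta A)X,X\rangle$ directly from the definition of the rough Laplacian, $(\Delta A)X=\sum_i(\nabla^2A)(X,e_i,e_i)$, working at a point $p$ with a geodesic normal orthonormal frame $\{e_i\}$, and to shuttle the argument slot $X$ through the two differentiation slots by means of the commutation identities of Lemma \ref{laplaceA1}. This converts $\Delta A$ into $\nabla_X div(A)$ plus two curvature contractions plus a total contraction of $\nabla T^A$, and the hypothesis $\nabla^*T^A=0$ is exactly what kills the last term.

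First I would apply Lemma \ref{laplaceA1}(b) with $Y=Z=e_i$ to exchange the first two slots, getting
\[(\nabla^2A)(X,e_i,e_i)=(\nabla^2A)(e_i,X,e_i)+(\nabla_{e_i}T^A)(X,e_i).\]
Summing over $i$, the last term is a full trace of $\nabla T^A$; since $T^A$ is skew in its two arguments and this skewness is preserved by $\nabla$, one has $\sum_i(\nabla_{e_i}T^A)(X,e_i)=\pm(\nabla^*T^A)(X)=0$, so $(\Delta A)X=\sum_i(\nabla^2A)(e_i,X,e_i)$. Next I would apply Lemma \ref{laplaceA1}(a) with $(X,Y,Z)\mapsto(e_i,X,e_i)$ to exchange the remaining two slots,
\[(\nabla^2A)(e_i,X,e_i)=(\nabla^2A)(e_i,e_i,X)+R(e_i,X)(Ae_i)-A\big(R(e_i,X)e_i\big),\]
and observe that in the normal frame the first term collapses to $\sum_i(\nabla^2A)(e_i,e_i,X)=\nabla_X\big(\sum_i(\nabla_{e_i}A)e_i\big)=\nabla_X div(A)$.

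Finally I would pair with $X$ and reorganize the two curvature sums using the symmetries of $R$. For the first, the pair symmetry together with skew-symmetry in the first two arguments gives $\sum_i\langle R(e_i,X)(Ae_i),X\rangle=-Ric_A(X,X)$, matching Definition \ref{ricci}(c). For the second, self-adjointness of $A$ turns it into $-\sum_i\langle R(e_i,X)e_i,AX\rangle$, and one more application of skew-symmetry yields $+Ric(X,AX)$. Collecting the three contributions produces
\[\langle(\Delta A)X,X\rangle=\langle\nabla_X div(A),X\rangle-Ric_A(X,X)+Ric(X,AX),\]
as claimed. I expect the main obstacle to be bookkeeping rather than any deep idea: one must keep the ordering of the three slots of $\nabla^2A$ straight through both commutations, and then match each curvature contraction to the paper's precise sign conventions for $Ric_A$ and $Ric$ so that the correct $-Ric_A+Ric(\,\cdot\,,A\,\cdot\,)$ combination emerges. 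The vanishing of the $T^A$-term is routine but essential, and relies on the observation that $\nabla$ preserves the skew-symmetry of $T^A$.
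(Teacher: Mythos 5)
Your proposal follows essentially the same route as the paper's proof: apply Lemma \ref{laplaceA1}(b) to commute the first two slots of $\nabla^2A$ (killing the resulting trace of $\nabla T^A$ via the hypothesis $\nabla^*T^A=0$), then Lemma \ref{laplaceA1}(a) to commute the remaining slots, identify $\sum_i(\nabla^2A)(e_i,e_i,X)$ with $\nabla_Xdiv(A)$, and convert the two curvature contractions into $-Ric_A(X,X)+Ric(X,AX)$ using the symmetries of $R$. The sign bookkeeping you describe matches the paper's conventions, so the argument is correct and essentially identical.
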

\begin{proof} For simplicity let $\left\{ {{e_i}} \right\}$ be an orthonormal local frame field with ${\nabla _{{e_i}}}{e_j} = 0$ at the computing point. By computation and Lemma \ref{laplaceA1} we have,
\begin{eqnarray*}
 \left\langle {\left( {\Delta A} \right)X,X} \right\rangle  &=& \sum\nolimits_i {\left\langle {\left( {{\nabla _{{e_i}}}{\nabla _{{e_i}}}A} \right)X,X} \right\rangle }  = \sum\nolimits_i {\left\langle {{\nabla ^2}A(X,{e_i},{e_i}),X} \right\rangle }
\\&=&\sum\nolimits_i {\left\langle {{\nabla ^2}A({e_i},X,{e_i}),X} \right\rangle }  + \sum\nolimits_i {\left\langle {\left( {{\nabla _{{e_i}}}{T^A}} \right)(X,{e_i}),X} \right\rangle }
\\&=&\sum\nolimits_i {\left\langle {{\nabla ^2}A({e_i},X,{e_i}),X} \right\rangle }  - \sum\nolimits_{i} \left\langle {div\left( {{T^A}} \right)X,X} \right\rangle
\\&=& \sum\nolimits_{i }\left\langle {{\nabla ^2}A({e_i},X,{e_i}),X} \right\rangle.
\end{eqnarray*}
 So by Lemma \ref{laplaceA1}, part (a) we have
\begin{eqnarray*}
\left\langle {\left( {\Delta A} \right)X,X} \right\rangle & =& \sum\nolimits_i {\left\langle {{\nabla ^2}A({e_i},X,{e_i}),X} \right\rangle }
\\&=& \sum\nolimits_{i} \left\langle {\nabla ^2}A({e_i},{e_i},X) + R({e_i},X)\left( {A{e_i}} \right) - A\left( \left( R({e_i},X){e_i} \right) \right),X \right\rangle
\\&=&\left\langle {\left( {{\nabla _X}divA} \right),X} \right\rangle  - Ri{c_A}\left( {X,X} \right) + Ric\left( {X,AX} \right).
\end{eqnarray*}
\end{proof}
\begin{corollary}\label{laplaceA} Let $ A $ be a self-adjoint $ (1,1)$-Codazzi tensor field. So $ T^A =0$ and we have,
\[\left\langle {\left( {\Delta A} \right)X,X} \right\rangle  = Hess(TraceA)(X,X) - Ri{c_A}(X,X) + Ric(X,AX).\]
Also, when ${\Sigma ^n} \subset {M^{n + 1}}$ be the hypersurface with shape operator $ A $ and the ambient manifold has constant sectional curvature $ c $, then
\[\left( {\Delta A} \right)X = n{\nabla _X}\nabla H - cnHX + \left( {cn - {{\left| A \right|}^2}} \right)AX + nH{A^2}X\]
\end{corollary}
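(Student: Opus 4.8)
The plan is to prove the two displayed identities separately, both resting on the commutation rules for second covariant derivatives established in Lemma \ref{laplaceA1}.

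For the first identity I would begin by observing that the Codazzi condition $(\nabla_X A)Y = (\nabla_Y A)X$ is precisely the statement that $T^A(X,Y) = (\nabla_X A)Y - (\nabla_Y A)X$ vanishes identically; in particular $\nabla^* T^A = 0$, so Lemma \ref{laplaceA10} is applicable and gives
\[\langle (\Delta A)X, X\rangle = \langle \nabla_X div(A), X\rangle - Ric_A(X,X) + Ric(X,AX).\]
It then remains to rewrite the first term on the right. The key step is the classical fact that the divergence of a Codazzi tensor equals the gradient of its trace: working in a local frame $\{e_i\}$ with $\nabla_{e_i}e_j = 0$ at the point, the self-adjointness of $\nabla_{e_i}A$ together with the Codazzi symmetry $(\nabla_{e_i}A)Y = (\nabla_Y A)e_i$ yield
\[\langle div(A), Y\rangle = \sum_i \langle (\nabla_{e_i}A)e_i, Y\rangle = \sum_i \langle (\nabla_Y A)e_i, e_i\rangle = Y(Trace(A)),\]
so that $div(A) = \nabla Trace(A)$ and hence $\langle \nabla_X div(A), X\rangle = Hess(Trace A)(X,X)$. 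Substituting into the previous display produces the first claim.

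For the hypersurface identity I would instead compute $(\Delta A)X = \sum_i \nabla^2 A(X, e_i, e_i)$ directly as a vector. Since the ambient manifold has constant sectional curvature $c$, the Example shows $T^A(X,Y) = (\bar R(Y,X)N)^T = 0$, so $A$ is Codazzi; Lemma \ref{laplaceA1}(b) then symmetrizes the first two slots, $\nabla^2 A(X, e_i, e_i) = \nabla^2 A(e_i, X, e_i)$, and applying Lemma \ref{laplaceA1}(a) to move the argument into the divergence slot gives
\[(\Delta A)X = \sum_i \nabla^2 A(e_i, e_i, X) + \sum_i R(e_i,X)(Ae_i) - \sum_i A(R(e_i,X)e_i).\]
Here the first sum equals $\nabla_X div(A) = n\,\nabla_X\nabla H$, using $div(A) = \nabla Trace(A)$ and $Trace(A) = nH$ from the first part.

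The main work, and the step most prone to sign and bookkeeping errors, is the evaluation of the two curvature sums through the Gauss equation of the space form,
\[R(U,V)W = c(\langle V,W\rangle U - \langle U,W\rangle V) + \langle AV,W\rangle AU - \langle AU,W\rangle AV.\]
A direct computation gives $\sum_i R(e_i,X)(Ae_i) = cAX - cnH X + A^3X - |A|^2 AX$ and $\sum_i A(R(e_i,X)e_i) = -c(n-1)AX + A^3X - nH A^2X$; the decisive observation is that the cubic terms $A^3X$ cancel upon subtraction. Collecting the surviving $c$-, $|A|^2$- and $H$-contributions and adding $n\,\nabla_X\nabla H$ then yields
\[(\Delta A)X = n\,\nabla_X\nabla H - cnH X + (cn - |A|^2)AX + nH A^2 X,\]
which is the asserted formula.
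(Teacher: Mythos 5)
Your argument is correct, and it is considerably more complete than what the paper offers: the paper dismisses the first identity as ``clear'' and simply cites Alias--Mastrolia--Rigoli, p.~333, for the hypersurface formula, whereas you derive both from scratch. For the first identity your route is the intended one — $T^A=0$ makes Lemma \ref{laplaceA10} applicable, and the classical identity $div(A)=\nabla\,Trace(A)$ for Codazzi tensors (which the paper itself only records later, before Theorem \ref{eigennp1}, as an ``easy computation'') converts $\left\langle \nabla_X div(A),X\right\rangle$ into $Hess(Trace\,A)(X,X)$. For the second identity you replace the external citation by a direct Simons-type computation: symmetrize the first two slots of $\nabla^2A$ via Lemma \ref{laplaceA1}(b) (legitimate since $\bar R(Y,X)N=0$ in a space form, so $A$ is Codazzi), swap the last two via Lemma \ref{laplaceA1}(a), identify $\sum_i\nabla^2A(e_i,e_i,X)=\nabla_X div(A)=n\nabla_X\nabla H$, and evaluate the curvature sums with the Gauss equation; I checked that $\sum_i R(e_i,X)(Ae_i)=cAX-cnHX+A^3X-|A|^2AX$ and $\sum_i A(R(e_i,X)e_i)=-c(n-1)AX+A^3X-nHA^2X$, so the $A^3X$ terms do cancel and the stated formula follows. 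The only point worth flagging is the sign convention for the shape operator implicit in the Gauss equation you quote, but since the target formula is odd under $A\mapsto -A$ (as is $\Delta A$ itself), the result is insensitive to that choice. What your approach buys is a self-contained proof; what the paper's citation buys is brevity.
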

\begin{proof} The first part is clear and for the second part refer to \cite{alias2016maximum} page 333.
\end{proof}
\begin{corollary}\label{exbfp}In Theorem \ref{extendedBochner} if $ A $ is parallel, then
\begin{equation}
\frac{1}{2}{L_A}({\left| {\nabla u} \right|^2}) = Trace\left( {A{\mkern 1mu} o{\mkern 1mu} hes{s^2}\left( u \right)} \right) + \left\langle {\nabla u,\nabla ({\Delta _A}u)} \right\rangle  + Ri{c_A}(\nabla u,\nabla u)
\label{b0p}
\end{equation}
\end{corollary}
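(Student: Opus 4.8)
The plan is to obtain this simply by specializing the Extended Bochner formula (\ref{b0}) of Theorem \ref{extendedBochner} to the parallel case, since "parallel" means $\nabla A = 0$, i.e. $(\nabla_X A) = 0$ for every vector field $X$. The right-hand side of (\ref{b0}) has five terms, two of which are built explicitly out of covariant derivatives of $A$, and the entire content of the corollary is the observation that exactly those two terms vanish. So first I would record the hypothesis $\nabla_X A = 0$ for all $X$ and then dispatch the two offending terms in order.

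For the first term, I would expand $div(A) = \sum_i (\nabla_{e_i} A) e_i$ in the orthonormal frame $\{e_i\}$; since each summand is a covariant derivative of $A$, parallelism gives $div(A) = 0$, so $\tfrac{1}{2}\langle \nabla |\nabla u|^2, div(A)\rangle = 0$. For the fourth term, the point is that the operator $\Delta_{(\cdot)}u$ depends \emph{linearly} on the tensor occupying the subscript slot: by Definition \ref{ricci}(b), for any self-adjoint operator $B$ one has $\Delta_B(u) = \sum_i \langle \nabla_{e_i}\nabla u, B e_i\rangle$. Taking $B = \nabla_{\nabla u} A$, which is zero because $A$ is parallel, every summand is $\langle \nabla_{e_i}\nabla u, 0\rangle = 0$, hence $\Delta_{(\nabla_{\nabla u} A)}u = 0$.

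Substituting these two vanishings into (\ref{b0}) removes the first and fourth terms on the right and leaves precisely
\[
\frac{1}{2}{L_A}({\left| {\nabla u} \right|^2}) = Trace\left( A \circ hes{s^2}(u) \right) + \left\langle \nabla u, \nabla ({\Delta _A}u) \right\rangle + Ri{c_A}(\nabla u,\nabla u),
\]
which is (\ref{b0p}). I do not anticipate any substantive obstacle here: the argument is a direct substitution, and the only thing worth stating carefully is the linearity of the subscript dependence in the definition of $\Delta_B$, which is what legitimizes setting the $\Delta_{(\nabla_{\nabla u}A)}u$ term to zero once $\nabla_{\nabla u}A$ is known to vanish.
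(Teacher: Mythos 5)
Your proposal is correct and is exactly the argument the paper intends (the corollary is stated without proof as an immediate specialization of Theorem \ref{extendedBochner}): parallelism kills $div(A)=\sum_i(\nabla_{e_i}A)e_i$ and $\nabla_{\nabla u}A$, so the first and fourth terms of (\ref{b0}) drop out, and your remark on the linearity of $B\mapsto\Delta_B u$ is the right justification for the second vanishing. Nothing further is needed.
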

The extended Bochner formula (\ref{b0}) is very complicated. The complication of the formula is about the existence of parameter ${\Delta _{{\nabla _u}A}}u $. However, when ${{\Delta _{{\nabla _u}A}}u \le 0}$, the Bochner formula get the simple Riccati inequality. In the following proposition, we get some parameters which seems are suitable to estimate ${{\Delta _{{\nabla _u}A}}u}$. These parameters show the affection of parallelness and values of eigenfunction on the value of ${{\Delta _{{\nabla _u}A}}u}$. The values of these parameter depend on analytic and algebraic properties of the tensor $ B $.
\begin{proposition}\label{lcomplicated} Let $ B $ be a $(1,1)$-self-adjoint tensor field on the manifold $ M $ then
\begin{eqnarray}\nonumber
\Delta_{\left( \nabla _{\nabla_{u}}B \right)}u&=&\nabla u.\nabla u.Trace(B) - \left\langle \nabla u,\left( \Delta B\right)\nabla u \right\rangle  + \sum\nolimits_{i} \left\langle T^{\left( \nabla_{\nabla u}B \right)}(\nabla u,{e_i}),{e_i} \right\rangle
\\&&+\sum\nolimits_{i} {e_i}.\left\langle \nabla u,{T^B}({e_i},\nabla u) \right\rangle .
\end{eqnarray}
\end{proposition}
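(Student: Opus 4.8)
The plan is to compute the left-hand side in a geodesic normal frame and then reorganize it using Lemma~\ref{laplaceA1}. Fix $p\in M$ and an orthonormal frame $\{e_i\}$ with $\nabla_{e_i}e_j=0$ at $p$, and abbreviate $P:=\nabla_{\nabla u}B$, which is again a self-adjoint $(1,1)$-tensor field, so that $\Delta_{(\nabla_{\nabla u}B)}u=\sum_i\langle\nabla_{e_i}\nabla u,Pe_i\rangle$. The key preliminary I would record is the covariant derivative of the \emph{tensor field} $P$: since $P$ depends on the vector field $\nabla u$, for every $W$ one has $(\nabla_W P)X=(\nabla^2 B)(X,\nabla u,W)+(\nabla B)(X,\nabla_W\nabla u)$, where the second summand comes from the Hessian $\nabla_W\nabla u$ hidden inside $P$. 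This single identity controls all the later bookkeeping.

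Next I would expand $\Delta_P u$. Using the self-adjointness of $P$ to write $\Delta_P u=div(P\nabla u)-\langle div(P),\nabla u\rangle$ and inserting the formula for $\nabla_W P$, the computation splits into two families: ``pure'' terms $\sum_i\langle(\nabla^2 B)(\cdot,\cdot,\cdot),\cdot\rangle$ carrying two derivatives of $B$, and ``cross'' terms $\sum_i\langle(\nabla B)(\cdot,\nabla_{e_i}\nabla u),\cdot\rangle$ that pair one derivative of $B$ with the Hessian of $u$. I would then apply Lemma~\ref{laplaceA1}(a) and (b) to permute the three slots of the pure terms into the two canonical shapes $(\nabla^2 B)(\nabla u,e_i,e_i)$ and $(\nabla^2 B)(e_i,\nabla u,\nabla u)$. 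Tracing the first against $\nabla u$ (with the sign coming from $-\langle div(P),\nabla u\rangle$) yields $-\langle\nabla u,(\Delta B)\nabla u\rangle$, while tracing the second gives $Hess(Trace\,B)(\nabla u,\nabla u)$, which together with the cross contribution $(\nabla_{\nabla u}\nabla u)Trace(B)$ reassembles into $\nabla u.\nabla u.Trace(B)$.

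Each use of Lemma~\ref{laplaceA1} leaves a first-order remainder of type $(\nabla_Z T^B)$ and, crucially, a genuine curvature remainder $R(e_i,\nabla u)(B\nabla u)-B\big(R(e_i,\nabla u)\nabla u\big)$. The point I want to flag as the main obstacle is that this curvature does not cancel by itself and has no curvature partner on the right-hand side; instead it must be shown to be exactly the content of the term $\sum_i\langle T^{(\nabla_{\nabla u}B)}(\nabla u,e_i),e_i\rangle$. Indeed, expanding $T^{P}(\nabla u,e_i)=(\nabla_{\nabla u}P)e_i-(\nabla_{e_i}P)\nabla u$ through the derivative formula for $P$ shows that $T^{P}(\nabla u,e_i)$ equals the reordering difference $(\nabla^2 B)(e_i,\nabla u,\nabla u)-(\nabla^2 B)(\nabla u,\nabla u,e_i)$ plus the associated Hessian cross terms, hence it carries precisely that curvature together with the $(\nabla_{\nabla u}T^B)$ remainder. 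As a consistency check, if $B$ is parallel then $\nabla B,\nabla^2 B$ and $T^B$ all vanish, the curvature remainder reduces to the familiar fact that a parallel tensor commutes with $R$, and both sides of the claimed identity are $0$.

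Finally I would collect the surviving first-order pieces. The antisymmetry $T^B(e_i,\nabla u)=-T^B(\nabla u,e_i)$ combined with the product rule $e_i.\langle\nabla u,T^B(e_i,\nabla u)\rangle=\langle\nabla_{e_i}\nabla u,T^B(e_i,\nabla u)\rangle+\langle\nabla u,(\nabla_{e_i}T^B)(e_i,\nabla u)\rangle+\langle\nabla u,T^B(e_i,\nabla_{e_i}\nabla u)\rangle$ lets me recognise the leftover $(\nabla_{e_i}T^B)$ and Hessian-$T^B$ contributions as the term $\sum_i e_i.\langle\nabla u,T^B(e_i,\nabla u)\rangle$. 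Summing the four groups then gives the stated identity. The error-prone part of this program is entirely in the third step: matching signs while the curvature produced by Lemma~\ref{laplaceA1} migrates into the $T^{(\nabla_{\nabla u}B)}$ term, and making sure no Hessian cross term is lost between the pure terms and the two $T$-expansions.
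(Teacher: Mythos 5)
Your plan is a genuinely different route from the paper's (the paper never invokes Lemma~\ref{laplaceA1} nor any curvature term: it works entirely with the splitting $(\nabla_{\nabla u}B)e_i=(\nabla_{e_i}B)\nabla u+T^B(\nabla u,e_i)$ and a double Leibniz rule), but as described it has a gap at its second step which prevents it from producing anything. The identity $\Delta_{P}u=div(P\nabla u)-\langle div(P),\nabla u\rangle$ with $P=\nabla_{\nabla u}B$ is correct, and so is your formula $(\nabla_W P)X=(\nabla^2B)(X,\nabla u,W)+(\nabla B)(X,\nabla_W\nabla u)$; but once you insert the latter into \emph{both} summands the ``pure'' and ``cross'' families cancel identically: $div(P\nabla u)=\sum_i\langle(\nabla_{e_i}P)\nabla u,e_i\rangle+\Delta_Pu$, while $\langle div(P),\nabla u\rangle=\sum_i\langle(\nabla_{e_i}P)e_i,\nabla u\rangle=\sum_i\langle(\nabla_{e_i}P)\nabla u,e_i\rangle$ because $\nabla_{e_i}P$ is self-adjoint, so the difference collapses to the tautology $\Delta_Pu=\Delta_Pu$ and neither canonical shape of $\nabla^2B$ survives to be traced. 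In particular $-\langle\nabla u,(\Delta B)\nabla u\rangle$ cannot arise ``with the sign coming from $-\langle div(P),\nabla u\rangle$'' as you propose. The move that actually makes the identity nontrivial, and which your plan omits, is the paper's second integration by parts on $\sum_i\langle\nabla_{e_i}\nabla u,(\nabla_{e_i}B)\nabla u\rangle$, namely $e_i.\langle\nabla u,(\nabla_{e_i}B)\nabla u\rangle=2\langle\nabla_{e_i}\nabla u,(\nabla_{e_i}B)\nabla u\rangle+\langle\nabla u,(\nabla_{e_i}\nabla_{e_i}B)\nabla u\rangle$, which transfers a derivative onto $B$, makes $\Delta_Pu$ reappear with a minus sign on the right-hand side, and is the sole source of the $\Delta B$ term.

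The step you yourself flag as error-prone is also a genuine obstruction, not bookkeeping. Expanding $T^{P}(\nabla u,e_i)$ produces, besides the reordering difference of $\nabla^2B$, the cross terms $(\nabla B)(e_i,\nabla_{\nabla u}\nabla u)-(\nabla B)(\nabla u,\nabla_{e_i}\nabla u)$, and these do \emph{not} match the cross terms hidden in $\sum_i e_i.\langle\nabla u,T^B(e_i,\nabla u)\rangle$: carrying the computation through leaves an uncancelled remainder of the shape $\sum_i\langle\nabla_{e_i}\nabla u,T^B(e_i,\nabla u)\rangle$, and the sign of the $T^{(\nabla_{\nabla u}B)}$ term comes out opposite to the one displayed. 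You can see that no bookkeeping can rescue the displayed identity by testing it on $B=fI$: then $\Delta_Pu=\langle\nabla f,\nabla u\rangle\Delta u$ while the right-hand side evaluates to $(2n-2)Hess f(\nabla u,\nabla u)+2n\,Hess u(\nabla f,\nabla u)-\langle\nabla f,\nabla u\rangle\Delta u$, which differs in general. (The same discrepancy occurs in the paper's own proof, whose final ``in other words'' line silently discards a term of exactly this shape; the loss is harmless in the later applications only because there $T^B=0$.) So your consistency check with $B$ parallel is too weak to detect the problem, and the absorption claimed in your third and fourth steps would fail when you tried to execute it.
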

\begin{proof}
Let $ B $ be a  $(1,1)$-tensor field, then
\begin{eqnarray*}
\Delta _{\left( \nabla _{\nabla_{u}}B \right)}u&=&\sum\nolimits_i {\left\langle {{\nabla _{{e_i}}}\nabla u,\left( {{\nabla _{{\nabla _u}}}B} \right){e_i}} \right\rangle }
\\&=&\sum\nolimits_i {\left\langle {{\nabla _{{e_i}}}\nabla u,\left( {{\nabla _{{e_i}}}B} \right)\nabla u} \right\rangle }  + \sum\nolimits_i {\left\langle {{\nabla _{{e_i}}}\nabla u,{T^B}(\nabla u,{e_i})} \right\rangle }
\\&=& \sum\nolimits_i {{e_i}.\left\langle {\nabla u,\left( {{\nabla _{{e_i}}}B} \right)\nabla u} \right\rangle  - \sum\nolimits_i {\left\langle {\nabla u,\left( {\nabla _{{e_i}}^2B} \right)\nabla u} \right\rangle } }
\\&&-\sum\nolimits_i {\left\langle {{\nabla _{{e_i}}}\nabla u,\left( {{\nabla _{{e_i}}}B} \right)\nabla u} \right\rangle }  + \sum\nolimits_i {\left\langle {{\nabla _{{e_i}}}\nabla u,{T^B}(\nabla u,{e_i})} \right\rangle }
\\&=& \sum\nolimits_i {{e_i}.\left\langle {\nabla u,\left( {{\nabla _{{e_i}}}B} \right)\nabla u} \right\rangle }  - \left\langle {\nabla u,\left( {\Delta B} \right)\nabla u} \right\rangle  - {\Delta _{\left( {{\nabla _{{\nabla _u}}}B} \right)}}u
\\&&-\sum\nolimits_i {\left\langle {{\nabla _{{e_i}}}\nabla u,{T^B}({e_i},\nabla u)} \right\rangle }  + \sum\nolimits_i {\left\langle {{\nabla _{{e_i}}}\nabla u,{T^B}(\nabla u,{e_i})} \right\rangle }
\\&=&\sum\nolimits_i {{e_i}.\left\langle {\nabla u,\left( {{\nabla _{{e_i}}}B} \right)\nabla u} \right\rangle }  - \left\langle {\nabla u,\left( {\Delta B} \right)\nabla u} \right\rangle  - {\Delta _{\left( {{\nabla _{{\nabla _u}}}B} \right)}}u
\\&&+ 2\sum\nolimits_i {\left\langle {{\nabla _{{e_i}}}\nabla u,{T^B}(\nabla u,{e_i})} \right\rangle }
\\&=& \sum\nolimits_i {{e_i}.\left\langle {\nabla u,\left( {{\nabla _{\nabla u}}B} \right){e_i}} \right\rangle }  + \sum\nolimits_i {{e_i}.\left\langle {\nabla u,{T^B}({e_i},\nabla u)} \right\rangle }  - \left\langle {\nabla u,\left( {\Delta B} \right)\nabla u} \right\rangle
\\&&- {\Delta _{\left( {{\nabla _{{\nabla _u}}}B} \right)}}u + 2\sum\nolimits_i {\left\langle {{\nabla _{{e_i}}}\nabla u,{T^B}(\nabla u,{e_i})} \right\rangle }
\\&=& {\Delta _{\left( {{\nabla _{{\nabla _u}}}B} \right)}}u + \sum\nolimits_i {\left\langle {\nabla u,\left( {{\nabla _{{e_i}}}{\nabla _{\nabla u}}B} \right){e_i}} \right\rangle }  + \sum\nolimits_i {\left\langle {{\nabla _{{e_i}}}\nabla u,{T^B}({e_i},\nabla u)} \right\rangle }
\\&&+ \sum\nolimits_i {\left\langle {\nabla u,{\nabla _{{e_i}}}\left( {{T^B}({e_i},\nabla u)} \right)} \right\rangle }  - \left\langle {\nabla u,\left( {\Delta B} \right)\nabla u} \right\rangle  - {\Delta _{\left( {{\nabla _{{\nabla _u}}}B} \right)}}u
\\&&+ 2\sum\nolimits_i {\left\langle {{\nabla _{{e_i}}}\nabla u,{T^B}(\nabla u,{e_i})} \right\rangle }.
\end{eqnarray*}
In other words,
\[{\Delta _{\left( {{\nabla _{{\nabla _u}}}B} \right)}}u = \left\langle {\nabla u,div\left( {{\nabla _{\nabla u}}B} \right)} \right\rangle  + \sum\nolimits_i {{e_i}.\left\langle {\nabla u,{T^B}({e_i},\nabla u)} \right\rangle }  - \left\langle {\nabla u,\left( {\Delta B} \right)\nabla u} \right\rangle .\]
But,
\begin{eqnarray*}
\left\langle {\nabla u,div\left( {{\nabla _{\nabla u}}B} \right)} \right\rangle &=&\sum\nolimits_i {\left\langle {\nabla u,{\nabla _{{e_i}}}\left( {{\nabla _{\nabla u}}B} \right){e_i}} \right\rangle }  = \sum\nolimits_i {\left\langle {{\nabla _{{e_i}}}\left( {{\nabla _{\nabla u}}B} \right)\nabla u,{e_i}} \right\rangle }
\\&=&\sum\nolimits_i {\left\langle {{\nabla _{\nabla u}}\left( {{\nabla _{\nabla u}}B} \right){e_i} + {T^{\left( {{\nabla _{\nabla u}}B} \right)}}(\nabla u,{e_i}),{e_i}} \right\rangle }
\\&=& \nabla u.\nabla u.Trace(B) + \sum\nolimits_i {\left\langle {{T^{\left( {{\nabla _{\nabla u}}B} \right)}}(\nabla u,{e_i}),{e_i}} \right\rangle } .
\end{eqnarray*}
So,
\begin{eqnarray*}
{\Delta _{\left( {{\nabla _{{\nabla _u}}}B} \right)}}u&=& \nabla u.\nabla u.Trace(B) - \left\langle {\nabla u,\left( {\Delta B} \right)\nabla u} \right\rangle  + \sum\nolimits_i {\left\langle {{T^{\left( {{\nabla _{\nabla u}}B} \right)}}(\nabla u,{e_i}),{e_i}} \right\rangle }
\\&&+ \sum\nolimits_i {{e_i}.\left\langle {\nabla u,{T^B}({e_i},\nabla u)} \right\rangle .}
\end{eqnarray*}
\end{proof}

We recall the following result from \cite{daicomparison}, Corollary 1.7.5. It is used for the rigidity result in Theorem \ref{eigen1}.
\begin{proposition}\cite{daicomparison}\label{hess} Let $ M $ be a complete Riemannian manifold, then $ M $ has constant sectional curvature $ H $ iff there is a non trivial smooth function $ u $ on $ M $ with $Hessu =  - Hug$, where $ g $ is the Riemannian metric on $ M $.
\end{proposition}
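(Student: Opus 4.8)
The plan is to prove the two implications separately, with the reverse direction ($\Leftarrow$) carrying essentially all of the content; for $H>0$ this is Obata's rigidity theorem, which is exactly the case needed for the rigidity statement in Theorem \ref{eigen1}. So assume first that there is a nonconstant $u$ with $Hess\,u=-Hu\,g$, equivalently $\nabla_X\nabla u=-Hu\,X$ for every vector field $X$. My first step is to feed $\nabla u$ into the curvature tensor. Substituting $\nabla_X\nabla u=-Hu\,X$ into
\[
R(X,Y)\nabla u=\nabla_X\nabla_Y\nabla u-\nabla_Y\nabla_X\nabla u-\nabla_{[X,Y]}\nabla u,
\]
the terms containing $\nabla_X Y$ and $\nabla_Y X$ cancel, leaving
\[
R(X,Y)\nabla u=H\bigl((Yu)X-(Xu)Y\bigr).
\]
Taking $Y=\nabla u$ and $X\perp\nabla u$ then shows that, on the open set $\{\nabla u\neq 0\}$, every plane containing $\nabla u$ has sectional curvature exactly $H$.

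Next I would globalize this partial curvature information. Differentiating the squared norm gives $\nabla|\nabla u|^2=2\,Hess\,u(\nabla u,\cdot)=-2Hu\,\nabla u$, so that $|\nabla u|^2+Hu^2$ is a constant $c$; this first integral is the key analytic input. For $H>0$ it bounds $u$, and here completeness is used exactly as in Obata's argument to produce a critical point $p$ of $u$. Restricting to unit-speed geodesics $\gamma$ issuing from $p$ and setting $f(r)=u(\gamma(r))$, the hypothesis yields the ODE $f''+Hf=0$ with $f'(0)=0$, so $u=u(p)\cos(\sqrt{H}\,r)$ with $r=d(p,\cdot)$, i.e. $u$ is radial. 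Plugging this back into $Hess\,u=-Hu\,g$ produces the space-form identity for the distance Hessian,
\[
Hess\,r=\sqrt{H}\cot(\sqrt{H}\,r)\,(g-dr\otimes dr),
\]
which says the geodesic spheres about $p$ are umbilic with precisely the space-form principal curvature. Integrating this ODE for the induced metrics on geodesic spheres forces, in polar coordinates, $g=dr^2+\frac{1}{H}\sin^2(\sqrt{H}\,r)\,g_{S^{n-1}}$, the constant-curvature-$H$ model; hence $M$ has constant sectional curvature $H$. The cases $H<0$ (with $\cosh$ in place of $\cos$ and no interior critical point) and $H=0$ are handled through the analogous space-form models. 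For the forward direction ($\Rightarrow$), on a space form of curvature $H$ I would simply verify that the radial function $u=u_0\cos(\sqrt{H}\,r)$ (respectively an affine function on the flat model when $H=0$) solves $Hess\,u=-Hu\,g$, using the known formula for $Hess\,r$; this is a short check once the distance Hessian is recorded.

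The main obstacle I anticipate is the middle step of the reverse direction. The curvature identity $R(X,Y)\nabla u=H\bigl((Yu)X-(Xu)Y\bigr)$ only controls planes through $\nabla u$, so the tangential sectional curvatures of the geodesic spheres are not detected directly. Promoting ``curvature $H$ on planes through $\nabla u$'' to genuine constant curvature is therefore a global matter: it runs through the first integral $|\nabla u|^2+Hu^2\equiv c$, the rigidity of the radial ODE $f''+Hf=0$, the existence of a critical point (where completeness is indispensable and the argument specializes naturally to the relevant $H>0$ case), and finally the explicit integration of $Hess\,r$ to reconstruct the full warped-product metric and recognize it as the space form of curvature $H$.
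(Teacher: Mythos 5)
The paper itself offers no proof of this proposition: it is imported verbatim from \cite{daicomparison} and invoked only through its ``only if'' direction with $H=K/(n-1)>0$ in the rigidity part of Theorem \ref{eigen1}, so your proposal must be judged on its own. For the case that actually matters---$H>0$ and the implication from $Hess\,u=-Hu\,g$ to constant curvature---your outline is the standard Obata argument and is essentially sound: the curvature identity $R(X,Y)\nabla u=H\left((Yu)X-(Xu)Y\right)$, the first integral $|\nabla u|^2+Hu^2\equiv c$, the critical point supplied by completeness, the radial form $u=u(p)\cos(\sqrt H\,r)$, and the space-form formula for $Hess\,r$, which integrates to the round metric in polar coordinates. (You should record that $u(p)\neq 0$ at the critical point; this follows since $c=Hu(p)^2>0$ would otherwise fail and force $u\equiv 0$.)

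The genuine gap is the sentence asserting that the cases $H\le 0$ ``are handled through the analogous space-form models.'' They are not analogous, and in fact the implication is false there, so no proof can close those cases. For $H=0$, $Hess\,u=0$ makes $\nabla u$ parallel, so $M$ splits isometrically as $\mathbb{R}\times N$ with $u$ the linear coordinate and $N$ arbitrary; $M$ need not be flat. For $H<0$, Tashiro-type warped products such as $M=\mathbb{R}\times_{\cosh t}N$ with $u=\sinh t$ satisfy $Hess\,u=-Hu\,g$ (after rescaling) for \emph{any} complete fibre $N$, and such $M$ has constant curvature only when $N$ does. The forward direction is likewise false as stated: a flat torus, $\mathbb{R}P^n$, or a compact hyperbolic manifold has constant curvature yet admits no nontrivial solution (indeed your own Obata argument shows that a nonconstant solution with $H>0$ forces $M\cong S^n$, ruling out $\mathbb{R}P^n$). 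These defects lie in the proposition as transcribed rather than in your reasoning, but the statement ``as written'' cannot be proved; what is true, and all the paper uses, is the $H>0$ reverse implication, which your argument does establish.
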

\section{Reilly formula, when $ A $ is parallel}
In this section, we prove the extended Reilly formula \ref{Reilly} when $ A $ is parallel. In fact the result is valid when ${\Delta _{{\nabla _{\nabla u}}A}}u = 0$.  As usually we integrate from the extended Bochner formula \ref{b0p}. The computation is coordinate-independent.
\begin{theorem}(\textbf{Reilly-type formula 1})\label{Reillyp} Let $ M $ be a complete Riemannian manifold and $ A $ be a parallel self-adjoint
$(1,1)$-tensor field on it, then
\begin{equation}\label{RR1}
B = C,
\end{equation}
where
\begin{eqnarray*}
B&=& \int_{\partial M} {\left( {\left\langle {{\nabla _{{\nabla ^\partial }u}}{\nabla ^\partial }u,A\overrightarrow{n}} \right\rangle  - 2{u_n}\left\langle {sha{p^\partial }({\nabla ^\partial }u),A\overrightarrow{n}} \right\rangle } \right)dvo{l_{\bar g}}}
\\&&+\int_{\partial M} {\left( {u_n^2H_A^\partial  - {u_n}\left\langle {A\overrightarrow{n},{\nabla ^\partial }{u_n}} \right\rangle } \right)dvo{l_{\bar g}}}
\\&&+\int_{\partial M} {\left( {\left( {{\nabla ^\partial }u.{u_n}} \right)\left\langle {\overrightarrow{n},A\overrightarrow{n}} \right\rangle  - {u_n}L_A^\partial (u)} \right)dvo{l_{\bar g}},}
\end{eqnarray*}
and
\[C = \int_M {\left( {Trace\left( {A\circ hes{s^2}(u)} \right)} \right)} dvo{l_{ g }} - \int_M {{L _A}(u)\left( {\Delta u} \right)} dvo{l_{g }} + \int_M {Ri{c_A}\left( {\nabla u,\nabla u} \right)} dvo{l_{ g }},\]
wherein $H_A^\partial : = Trace(A \circ shap{e^\partial })$ is defined as an extended mean curvature of the boundary $\partial M$, $\overrightarrow{n}$ is the outward unit vector field on $\partial M$ and $\overline g $ is the restricted metric on $\partial M$ and  ${\nabla ^\partial }$, $L _A^\partial $ are gradient and extended Laplacian with respect to the metric ${\bar g}$, respectively.
\end{theorem}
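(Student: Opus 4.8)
The plan is to integrate the parallel extended Bochner formula \eqref{b0p} over $M$ and then sort every resulting term either into one of the three bulk integrals defining $C$ or into a boundary integral via the divergence theorem; the real labor lies in rewriting those boundary integrals into the explicit tangential form of $B$.

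First I would record a compatibility remark: because $A$ is parallel, $div(A)=0$, and from Definition \ref{ricci}(a),(b) the relation $\nabla_{e_i}(A\nabla u)=A\nabla_{e_i}\nabla u$ gives $L_A(u)=\Delta_A(u)$ for every smooth $u$. Hence the term $\langle\nabla u,\nabla(\Delta_A u)\rangle$ occurring in \eqref{b0p} may be read interchangeably with the $L_A$ that appears in $C$. Next, integrating \eqref{b0p}, the left-hand side is $\tfrac12\int_M L_A(|\nabla u|^2)=\tfrac12\int_M div(A\nabla|\nabla u|^2)$, which the divergence theorem converts into the single boundary term $\tfrac12\int_{\partial M}\langle A\nabla|\nabla u|^2,\overrightarrow{n}\rangle\,dvol_{\bar g}$. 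For the gradient term I would integrate by parts through $div\big((\Delta_A u)\nabla u\big)=\langle\nabla(\Delta_A u),\nabla u\rangle+(\Delta_A u)\Delta u$, obtaining $\int_M\langle\nabla u,\nabla(\Delta_A u)\rangle=\int_{\partial M}u_n\,\Delta_A u-\int_M(\Delta_A u)(\Delta u)$. Collecting everything, the three interior integrals reassemble exactly into $C$, and the integrated identity reduces to
\[
\tfrac12\int_{\partial M}\langle A\nabla|\nabla u|^2,\overrightarrow{n}\rangle\,dvol_{\bar g}-\int_{\partial M}u_n\,\Delta_A u\,dvol_{\bar g}=C,
\]
so it remains to show that the left-hand boundary expression coincides with $B$.

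The remaining and most delicate step is this boundary computation, which I expect to be the main obstacle because of the bookkeeping of tangential versus normal components and of signs. Here I would fix a local orthonormal frame $\{E_\alpha,\overrightarrow{n}\}$ adapted to $\partial M$, decompose $\nabla u=\nabla^\partial u+u_n\overrightarrow{n}$ and split $A\overrightarrow{n}$ into its tangential and normal parts, and invoke the Gauss formula $\nabla_X Y=\nabla^\partial_X Y+\langle shap^\partial(X),Y\rangle\overrightarrow{n}$ together with $\nabla_X\overrightarrow{n}=-shap^\partial(X)$ for $X,Y$ tangent to $\partial M$. Writing $\tfrac12\langle A\nabla|\nabla u|^2,\overrightarrow{n}\rangle=\langle\nabla_{A\overrightarrow{n}}\nabla u,\nabla u\rangle=Hess\,u(A\overrightarrow{n},\nabla u)$ and expanding both arguments via this decomposition should produce precisely the terms $\langle\nabla_{\nabla^\partial u}\nabla^\partial u,A\overrightarrow{n}\rangle$, $-2u_n\langle shap^\partial(\nabla^\partial u),A\overrightarrow{n}\rangle$, and $(\nabla^\partial u.\,u_n)\langle\overrightarrow{n},A\overrightarrow{n}\rangle$. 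In parallel, splitting the trace $\Delta_A u=Trace(A\circ Hess\,u)$ into its tangential slots and the normal slot should yield the intrinsic extended Laplacian $L_A^\partial(u)$, the extended mean-curvature contribution $u_n H_A^\partial$, and the cross term $\langle A\overrightarrow{n},\nabla^\partial u_n\rangle$; multiplied by the outstanding factor $u_n$ these account for the remaining terms of $B$.

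Finally, I would note that the absence in $B$ of any $\nabla_{\overrightarrow{n}}A$-type contributions is exactly the specialization of the general Theorem \ref{Reilly} to the parallel case: every term carrying a covariant derivative of $A$ drops out once $\nabla A=0$, so the boundary expression above matches $B$ term by term, completing the identity \eqref{RR1}. The delicate points to verify carefully are the sign conventions in the Gauss formula and the correct attribution of the mixed tangential--normal Hessian terms, but no genuinely new idea beyond the divergence theorem and the adapted-frame decomposition is required.
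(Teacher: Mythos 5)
Your proposal follows essentially the same route as the paper: integrate the parallel Bochner formula, convert $\tfrac12\int_M L_A(|\nabla u|^2)$ and $\int_M\langle\nabla u,\nabla(L_A u)\rangle$ via the divergence theorem and integration by parts, and then expand the resulting boundary integrand $\langle\nabla_{A\overrightarrow{n}}\nabla u,\nabla u\rangle - u_n L_A u$ in an adapted frame using the Gauss formula. The one bookkeeping caveat is that the coefficient $2$ in $-2u_n\langle shap^\partial(\nabla^\partial u),A\overrightarrow{n}\rangle$ comes from combining one copy produced by the Hessian expansion with a second copy $-u_n\langle\overrightarrow{n},A(shap^\partial(\nabla^\partial u))\rangle$ produced by the trace splitting of $L_A u$ on the boundary, not from the Hessian term alone---precisely the attribution issue you flagged as needing care.
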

\begin{proof}Since $A$ is parallel we have $L_{A}=\Delta_{A}$. Therefore
integration from the extended Bochner formula in corollary \ref{exbfp} gives,
\begin{eqnarray}\nonumber
 \frac{1}{2}\int_M {{L_A}({{\left| {\nabla u} \right|}^2})dvo{l_g}}  &=& \int_M {Trace\left( {A\circ  hes{s^2}\left( u \right)} \right)dvo{l_g}}  + \int_M {\left\langle {\nabla u,\nabla ({L _A}u)} \right\rangle dvo{l_g}}
\\\label{R1}&&+\int_M {Ri{c_A}(\nabla u,\nabla u)dvo{l_g}}.
\end{eqnarray}
Each of the terms in the above formula are computed as follows,
\begin{equation}\label{R2}
\int_M {{L_A}({{\left| {\nabla u} \right|}^2})dvo{l_g}}  = \int_{\partial M} {\left\langle {\nabla ({{\left| {\nabla u} \right|}^2}),A\overrightarrow{n}} \right\rangle dvo{l_{\bar g}}},
\end{equation}
and
\begin{equation}\label{R3}
\int_M {\left\langle {\nabla u,\nabla ({L _A}u)} \right\rangle dvo{l_g}}  =  - \int_M {({L _A}u)(\Delta u)dvo{l_g}}  + \int_{\partial M} {({L_A}u)\left\langle {\nabla u,\overrightarrow{n}} \right\rangle dvo{l_{\bar g}}}.
\end{equation}
So, from (\ref{R1}), (\ref{R2}) and (\ref{R3}) with re-arrangement we have,
\[B = C,\]
where
\begin{equation}
B = \frac{1}{2}\int_{\partial M} {\left\langle {\nabla ({{\left| {\nabla u} \right|}^2}),A\overrightarrow{n}} \right\rangle dvo{l_{\bar g}}}  - \int_{\partial M} {({L_A}u)\left\langle {\nabla u,\overrightarrow{n}} \right\rangle dvo{l_{\bar g}}},
\label{R4}
\end{equation}
and
\[C = \int_M {Trace\left( {A\circ hes{s^2}\left( u \right)} \right)dvo{l_g} - \int_M {({L_A}u)(\Delta u)dvo{l_g}} }  + \int_M {Ri{c_A}(\nabla u,\nabla u)dvo{l_g}}. \]
For Dirichlet or Neumann boundary conditions we should compute (\ref{R4}) on the boundary of $ M $, in other words, we need to compute the following parameters with respect to the intrinsic and extrinsic geometry of $\partial M$,
\begin{equation}
\left\langle {\nabla ({{\left| {\nabla u} \right|}^2}),A\overrightarrow{n}} \right\rangle,
\label{rr1}
\end{equation}
and
\begin{equation}
({L_A}u)\left\langle {\nabla u,\overrightarrow{n}} \right\rangle.
\label{rr2}
\end{equation}
 We denote the gradient and $A-$Laplacian with respect to the geometry of $\partial M$ by ${\nabla ^\partial }$ and ${L_A ^\partial }$ and the gradient and $A-$Laplacian with respect to $M$ by $\nabla $ and $L_A $. Now, we compute each item.
Computation of (\ref{rr1}) results that
\[\frac{1}{2}\left\langle {\nabla ({{\left| {\nabla u} \right|}^2}),A\overrightarrow{n}} \right\rangle  = \frac{1}{2}A\overrightarrow{n}.\left\langle {\nabla u,\nabla u} \right\rangle  = \left\langle {{\nabla _{A\overrightarrow{n}}}\nabla u,\nabla u} \right\rangle. \]
Now, for computation of (\ref{rr2}),
let $\left\{ {{e_i}} \right\}$ be local orthonormal frame field such that ${e_n} = \overrightarrow{n}$ be the outward unit vector field on $ \partial M $, then
\begin{eqnarray*}
 {{L_A}u}&=& \sum\nolimits_{i < n} {\left\langle {\nabla _{{e_i}}^{}\nabla u,A{e_i}} \right\rangle }  + \left\langle {{\nabla _{\overrightarrow{n}}}\nabla u,A\overrightarrow{n}} \right\rangle
\\&=& \sum\nolimits_{i < n} {\left\langle {\nabla _{{e_i}}^{}\left( {{\nabla ^\partial }u + \left( {{u_n}} \right)\overrightarrow{n}} \right),A{e_i}} \right\rangle }  + \left\langle {{\nabla _{\overrightarrow{n}}}\nabla u,A\overrightarrow{n}} \right\rangle
\\&=& \sum\nolimits_{i < n} {\left\langle {\nabla _{{e_i}}^{}{\nabla ^\partial }u,A{e_i}} \right\rangle }  + \sum\nolimits_{i < n} {\left\langle {\nabla _{{e_i}}^{}\left( {\left( {{u_n}} \right)\overrightarrow{n}} \right),A{e_i}} \right\rangle }  + \left\langle {{\nabla _{\overrightarrow{n}}}\nabla u,A\overrightarrow{n}} \right\rangle
\\&=&\sum\nolimits_{i < n} {\left\langle {\nabla _{{e_i}}^\partial {\nabla ^\partial }u,A{e_i}} \right\rangle }  + \sum\nolimits_{i < n} {\left\langle {I{I^\partial }({\nabla ^\partial }u,{e_i}),A{e_i}} \right\rangle }
\\&&+\sum\nolimits_{i < n} {\left\langle {\overrightarrow{n},A\left( {\left( {{e_i}.{u_n}} \right){e_i}} \right)} \right\rangle }  + \sum\nolimits_{i < n} {{u_n}\left\langle {\nabla _{{e_i}}^{}\overrightarrow{n},A{e_i}} \right\rangle }  + \left\langle {{\nabla _{\overrightarrow{n}}}\nabla u,A\overrightarrow{n}} \right\rangle
\\&=& L_A^\partial (u) + \sum\nolimits_{i < n} {\left\langle {\left\langle {sha{p^\partial }({\nabla ^\partial }u),{e_i}} \right\rangle \overrightarrow{n},A{e_i}} \right\rangle }  + \left\langle {\overrightarrow{n},A\left( {\sum\nolimits_{i < n} {\left( {{e_i}.{u_n}} \right){e_i}} } \right)} \right\rangle
\\&&-\sum\nolimits_{i < n} {{u_n}\left\langle {sha{p^\partial }\left( {{e_i}} \right),A{e_i}} \right\rangle }  + \left\langle {{\nabla _{\overrightarrow{n}}}\nabla u,A\overrightarrow{n}} \right\rangle
\\&=&L_A^\partial (u) + \left\langle {\overrightarrow{n},A\left( {\sum\nolimits_{i < n} {\left\langle {sha{p^\partial }({\nabla ^\partial }u),{e_i}} \right\rangle {e_i}} } \right)} \right\rangle  + \left\langle {A\overrightarrow{n},{\nabla ^\partial }{u_n}} \right\rangle
\\&&- {u_n}\sum\nolimits_{i < n} {\left\langle {A\,\circ \,sha{p^\partial }\left( {{e_i}} \right),{e_i}} \right\rangle }  + \left\langle {{\nabla _{\overrightarrow{n}}}\nabla u,A\overrightarrow{n}} \right\rangle
\\&=& L_A^\partial (u) + \left\langle {\overrightarrow{n},A\left( {sha{p^\partial }({\nabla ^\partial }u)} \right)} \right\rangle  + \left\langle {A\overrightarrow{n},{\nabla ^\partial }{u_n}} \right\rangle  - {u_n}H_A^\partial  + Hess(u)(\overrightarrow{n},A\overrightarrow{n}),
\end{eqnarray*}
where by definition we have $L_A^\partial u = \sum\nolimits_{i < n} {\left\langle {\nabla _{{e_i}}^\partial {\nabla ^\partial }u,A{e_i}} \right\rangle } $ and $ "shap" $ is the shape operator of $\partial M$ with respect to outward unit vector field $ n $ of $ M $. We also define  $H_A^\partial : = Trace(A\circ sha{p^\partial })$ as a generalization of the mean curvature of the boundary of $M$. \par
We know that (\ref{R4}) is obtained by integration from $\left\langle {{\nabla _{An}}\nabla u,\nabla u} \right\rangle  - {u_n}{L_A}u$. So we have,
\begin{eqnarray*}
\left\langle {{\nabla _{A\overrightarrow{n}}}\nabla u,\nabla u} \right\rangle  - {u_n}{L_A}u&=& Hess(u)(\nabla u,A\overrightarrow{n}) - {u_n}{L_A}u{\mkern 1mu} {\mkern 1mu} {\mkern 1mu}
\\&=& Hess(u)(\nabla u,A\overrightarrow{n}) - {u_n}L_A^\partial (u) - {u_n}\left\langle {A\overrightarrow{n},\left( {sha{p^\partial }({\nabla ^\partial }u)} \right)} \right\rangle
\\&&- {u_n}\left\langle {A\overrightarrow{n},{\nabla ^\partial }{u_n}} \right\rangle  + u_n^2H_A^\partial  - Hess(u)({u_n}\overrightarrow{n},A\overrightarrow{n}).
\end{eqnarray*}
But,
\begin{eqnarray*}
&&Hess(u)(\nabla u,A\overrightarrow{n}) - Hess(u)({u_n}\overrightarrow{n},A\overrightarrow{n})\\&&= Hess(u)(\nabla u - {u_n}\overrightarrow{n},A\overrightarrow{n})=Hess(u)({\nabla ^\partial }u,A\overrightarrow{n})= \left\langle {{\nabla _{{\nabla ^\partial }u}}\nabla u,A\overrightarrow{n}} \right\rangle\\&&=\left\langle {{\nabla _{{\nabla ^\partial }u}}{\nabla ^\partial }u,A\overrightarrow{n}} \right\rangle  + \left( {{\nabla ^\partial }u.{u_n}} \right)\left\langle {\overrightarrow{n},A\overrightarrow{n}} \right\rangle  - {u_n}\left\langle {sha{p^\partial }({\nabla ^\partial }u),A\overrightarrow{n}} \right\rangle.
\end{eqnarray*}
Hence the extended Reilly formula when $ A $ is parallel becomes as follows,
\[B = C,\]
where
\begin{eqnarray*}
B&=& \int_{\partial M} {\left( {\left\langle {{\nabla _{{\nabla ^\partial }u}}{\nabla ^\partial }u,A\overrightarrow{n}} \right\rangle  - 2{u_n}\left\langle {sha{p^\partial }({\nabla ^\partial }u),A\overrightarrow{n}} \right\rangle } \right)dvo{l_{\bar g}}}
\\&&+ \int_{\partial M} {\left( {u_n^2H_A^\partial  - {u_n}\left\langle {A\overrightarrow{n},{\nabla ^\partial }{u_n}} \right\rangle } \right)dvo{l_{\bar g}}}
\\&&+ \int_{\partial M} {\left( {\left( {{\nabla ^\partial }u.{u_n}} \right)\left\langle {\overrightarrow{n},A\overrightarrow{n}} \right\rangle  - {u_n}L_A^\partial (u)} \right)dvo{l_{\bar g}},}
\end{eqnarray*}
and
\[C = \int_M {Trace\left( {A\circ hes{s^2}\left( u \right)} \right)dvo{l_g}}  - \int_M {({L_A}u)(\Delta u)dvo{l_g}}  + \int_M {Ri{c_A}(\nabla u,\nabla u)dvo{l_g}}. \]
\end{proof}
\begin{remark} We define the second fundamental form of a hypersurface ${\Sigma ^{}} \subset {M^{}}$ by the rule $II (X,Y) = {\left( {{\nabla _X}Y} \right)^ \bot } =  - \left\langle {{\nabla _X}N,Y} \right\rangle $. So we call a hypersurface "convex" if the second fundamental form is negative definite.
\end{remark}
\begin{remark}Note that the result of the Theorem \ref{Reillyp} depends on the conditions ${\Delta _{{\nabla _{\nabla u}}A}}u = 0$ and $ div(A)=0 $, not parallelness of $ A $.
\end{remark}
\section{Estimates of the first eigenvalue of $ L_A $ when $ A $ is parallel}
By the Rielly-type formula, we get some estimates for the first positive eigenvalue of operator ${L_A}$ by some restrictions on $Ri{c_A}$, when $A$ is parallel and the manifold is compact. For manifolds with boundary, we denote ${\lambda _D}$, ${\lambda _N}$ for the first eigenvalue with the Dirichlet and Neumann boundary conditions respectively.

\begin{proof}[\textbf{Proof of the Theorem \ref{eigen1}}]
Since $A$ is parallel, one has,
\[{\delta _1}\left\langle {X,X} \right\rangle  \le \left\langle {AX,X} \right\rangle  \le {\delta _n}\left\langle {X,X} \right\rangle, \]
where ${\delta _1},{\delta _n}$ are defined as Definition \ref{minmax}.
Let ${\lambda}$ be the first positive eigenvalue of the operator ${L_A}(u) = div(A\nabla u)$ and $u$ be the corresponding eigenfunction, i.e., ${L_A}(u) + \lambda u = 0$. We can assume $u > 0$ and $\int_M {{u^2}dvo{l_g}}  = 1$. So we have,
\[{\delta _1}\int_M {{{\left| {\nabla u} \right|}^2}} dvo{l_g} \le \int_M {\left\langle {A\nabla u,\nabla u} \right\rangle dvo{l_g}}  =  - \int_M {u{L_A}(u)dvo{l_g}}  = \lambda \int_M {{u^2}dvo{l_g}}  = \lambda. \]
Similarly, one has ${\delta _n}\int_M {{{\left| {\nabla u} \right|}^2}} dvo{l_g} \ge \lambda $, so
\begin{equation}
\frac{\lambda }{{{\delta _n}}} \le \int_M {{{\left| {\nabla u} \right|}^2}} dvo{l_g} \le \frac{\lambda }{{{\delta _1}}}.
\label{e1}
\end{equation}
Since $M$ has empty boundary, by Theorem \ref{Reillyp} we get,
\[\int_M {Trace\left( {A \circ hes{s^2}\left( u \right)} \right)dvo{l_g}}  - \int_M {({L_A}u)(\Delta u)dvo{l_g}}  + \int_M {Ri{c_A}(\nabla u,\nabla u)dvo{l_g}}  = 0.\]
As ${L_A}u = - \lambda u$, we obtain,
\[ - \int_M {({L_A}u)(\Delta u)dvo{l_g}}  = \int_M {\lambda u(\Delta u)dvo{l_g}}  =  - \lambda \int_M {{{\left| {\nabla u} \right|}^2}dvo{l_g}}. \]
  Now, we prove the two estimates of the theorem.
\begin{itemize}
\item[1)] We note that $\int_M {Trace\left( {A\circ{{\left( {hess^2\left( u \right)} \right)}}} \right)dvo{l_g}}  \ge 0$, so
\[ - \lambda \int_M {{{\left| {\nabla u} \right|}^2}dvo{l_g}}  + \int_M {Ri{c_A}(\nabla u,\nabla u)dvo{l_g}}  \le 0,\]
and by assumption $Ri{c_A} \ge Kg$, hence $\left( {K - \lambda } \right)\int_M {{{\left| {\nabla u} \right|}^2}dvo{l_g}}  \le 0$ or equivalently $K \le \lambda $.
\\
To get a better estimate for $\lambda $, we use,
\begin{equation}
Trace\left( {A \circ hes{s^2}\left( u \right)} \right) \ge \frac{{{{({L_A}u)}^2}}}{{{\mkern 1mu} Trace\left( {A{\mkern 1mu} } \right)}},
\label{e111}
\end{equation}
By Theorem \ref{Reillyp} we see that (note $ Trace(A) $ is constant),
\[\frac{1}{{\,Trace\left( {A\,} \right)}}\int_M {{{({L_A}u)}^2}dvo{l_g}}  - \int_M {({L_A}u)(\Delta u)dvo{l_g}}  + \int_M {Ri{c_A}(\nabla u,\nabla u)dvo{l_g}}  \le 0.\]
So,
\[\frac{1}{{\,Trace\left( {A\,} \right)}}{\lambda ^2} + \left( {K - \lambda } \right)\int_M {{{\left| {\nabla u} \right|}^2}dvo{l_g}}  \le 0.\]
And by (\ref{e1}) we get,
\[\frac{{{\lambda ^2}}}{{\,Trace\left( {A\,} \right)}} + \left( {K - \lambda } \right)\frac{\lambda }{{{\delta _1}}} \le 0.\]
And finally,
\[\lambda  \ge \frac{{Trace\left( {A\,} \right)K}}{{Trace\left( {A\,} \right) - {\delta _1}}}.\]
\item[2)] By Theorem \ref{Reillyp} and the assumption on ${Ri{c_A}}$  we have,
\[\frac{1}{{\,Trace\left( {A\,} \right)}}\int_M {{{(\lambda u)}^2}dvo{l_g}}  + \int_M {(\lambda u)(\Delta u)dvo{l_g}}  + K\int_M {\left\langle {A\nabla u,\nabla u} \right\rangle dvo{l_g}}  \le 0.\]
We note that
\[\int_M {\left\langle {A\nabla u,\nabla u} \right\rangle dvo{l_g}}  =  - \int_M {u{L_A}(u)dvo{l_g}}  = \lambda \int_M {{u^2}dvo{l_g}}  = \lambda. \]
So
\[\frac{1}{{\,Trace\left( {A\,} \right)}}{\lambda ^2} - \lambda \int_M {{{\left| {\nabla u} \right|}^2}dvo{l_g}}  + K\lambda  \le 0.\]
And by (\ref{e1}) we get,
\[\lambda  \ge \frac{{Trace\left( {A\,} \right){\delta _1}K}}{{Trace\left( {A\,} \right) - {\delta _1}}}.\]
\end{itemize}
For the rigidity result,  let $ u $ be the corresponding eigenfunction, the equality in (\ref{e1b1}) and (\ref{e1b2}) implies equality in (\ref{e111}).  By proposition \ref{newton} and Cauchy-Schwartz inequality one has
\[A = \alpha I\,\,\,,\,\,\,Hessu = hI\,\,,\,\,\lambda  = \frac{{n\alpha K}}{{(n - 1)}},\]
where $\alpha ,h$ be smooth functions. So from ${L_A}u = {\Delta _A}u =  - \frac{{n\alpha Ku}}{{(n - 1)}}$ we conclude $nh\alpha  = {\Delta _A}u =  - \frac{{n\alpha Ku}}{{(n - 1)}}$, equivalently $h =  - \frac{K}{{(n - 1)}}u$ and the result follows by proposition \ref{hess}.
\end{proof}
\begin{remark} If $A = I{d_{TM}}$ we get the Lichnerowicz and Obata estimate for the first eigenvalue
in both cases.
\end{remark}
\begin{corollary} Let ${\Sigma ^n} \subset {M^{n + 1}}$ be a complete orientable Riemannian hypersurface or be an orientable space-like hypersurface in Lorentzian manifold $M$, which ${P_k}$ is parallel and positive-definite operator and
\begin{itemize}
\item[a)]$Ri{c_{{P_k}}} \ge \Lambda  > 0$,
\item[b)]$Ri{c_{{P_k}}}(X,X) \ge \Lambda {\left| X \right|^2}$, where $\Lambda  > 0$ and $X \in \Gamma \left( {TM} \right)$ is arbitrary vector field.
\end{itemize}
Then one has the following estimates of the first eigenvalue $ {\lambda _1}$ of the operator ${L_k}$,
\begin{itemize}
\item[a)]$\lambda  \ge \frac{{Trace(A)K}}{{Trace(A) - {\delta _1}}}\, \ge \frac{{{c_k}}}{{{c_k} - 1}}\Lambda,$
\item[b)]$\lambda  \ge \frac{{{c_k}{\delta _1}}}{{{c_k} - 1}}\Lambda $.
\end{itemize}
\end{corollary}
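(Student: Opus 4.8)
The plan is to read this statement as a direct specialization of Theorem \ref{eigen1} to the case $A = P_k$, where $P_k$ is the $k$-th Newton transformation of the shape operator of $\Sigma^n$ and $L_k = L_{P_k}$. By hypothesis $P_k$ is parallel and positive definite, so Theorem \ref{eigen1} applies verbatim with $K = \Lambda$: under assumption (a), which is $Ri{c_{P_k}} \ge \Lambda g$, estimate (1) of Theorem \ref{eigen1} gives $\lambda \ge \frac{Trace(P_k)\Lambda}{Trace(P_k) - \delta_1}$, while under assumption (b), read as the analogue of condition (2) of Theorem \ref{eigen1}, estimate (2) gives $\lambda \ge \frac{Trace(P_k)\delta_1\Lambda}{Trace(P_k) - \delta_1}$. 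These are exactly the first inequalities in each item, so (modulo checking that $\Sigma$ is in the closed setting of Theorem \ref{eigen1}) the only genuine work is the \emph{second} inequality in each item.

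Both refinements reduce to the single scalar comparison $\frac{Trace(P_k)}{Trace(P_k) - \delta_1} \ge \frac{c_k}{c_k - 1}$, after which item (a) follows by multiplying through by $\Lambda$ and item (b) by multiplying through by $\delta_1\Lambda$. First I would record that, since $P_k$ is positive definite with smallest eigenvalue $\delta_1 > 0$ and $n \ge 2$, one has $Trace(P_k) \ge n\delta_1 > \delta_1$, so the function $t \mapsto \frac{t}{t - \delta_1}$ is defined and strictly decreasing at $t = Trace(P_k)$. Consequently the desired comparison is equivalent to the clean algebraic inequality
\[
Trace(P_k) \le c_k\,\delta_1 .
\]

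To attack this last inequality I would invoke the standard identities for Newton transformations: writing $\kappa_1,\dots,\kappa_n$ for the principal curvatures and $S_k$ for the $k$-th elementary symmetric function, one has $Trace(P_k) = (n-k)S_k = (n-k)\binom{n}{k}H_k = c_k H_k$ with $c_k = (n-k)\binom{n}{k}$, whereas the eigenvalues of $P_k$ are the truncated symmetric functions obtained by deleting one curvature. Thus $Trace(P_k) \le c_k\delta_1$ is equivalent to $\delta_1 \ge H_k$, that is, to each eigenvalue of $P_k$ dominating the normalized $k$-mean curvature.

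This is where I expect the main difficulty to lie, since $\delta_1 \ge H_k$ is \emph{false} for a generic positive-definite $P_k$: already for $k=1$, $n=2$, taking $\kappa = (t,\varepsilon)$ forces $\delta_1 = \varepsilon \to 0$ while $H_1 = (t+\varepsilon)/2$ stays bounded away from zero. The hypothesis that must rescue the estimate is the parallelism $\nabla P_k = 0$ together with positive definiteness; these rigidify the hypersurface strongly enough (indeed the principal curvatures become constant and their admissible configurations are highly constrained) that $\delta_1 \ge H_k$ does hold. Hence the crux of the proof is to show that a complete hypersurface with $P_k$ parallel and positive definite has its principal curvatures confined to the cone in which every eigenvalue of $P_k$ is at least $H_k$; I would extract this from the rigidity of parallel Newton tensors in the cited works on the $L_k$ operator, after which the monotonicity argument closes the chain and yields both claimed estimates.
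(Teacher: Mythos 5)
The paper supplies no proof of this corollary at all: it is stated as a bare specialization of Theorem \ref{eigen1} to $A=P_k$, $K=\Lambda$, $L_k=L_{P_k}$, with $c_k$, $P_k$ and $L_k$ left undefined (presumably $c_k=(n-k)\binom{n}{k}$, so that $\operatorname{Trace}(P_k)=c_kH_k$, as in the cited work of Alencar, Silva Neto and Zhou). Your derivation of the first inequality in item (a) and of item (b) from Theorem \ref{eigen1} --- reading hypothesis (b) as $Ric_{P_k}(X,X)\ge\Lambda\langle P_kX,X\rangle$, which is the only reading under which part (2) of Theorem \ref{eigen1} applies, since as printed hypothesis (b) is identical to hypothesis (a) --- matches the intended argument. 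Your reduction of the remaining inequality $\frac{\operatorname{Trace}(P_k)}{\operatorname{Trace}(P_k)-\delta_1}\ge\frac{c_k}{c_k-1}$ to $\operatorname{Trace}(P_k)\le c_k\delta_1$, i.e.\ to $\delta_1\ge H_k$, via monotonicity of $t\mapsto t/(t-\delta_1)$ for $t>\delta_1$, is correct and is already more than the paper offers.

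However, the proposal does not close the one step that carries all the content. You rightly observe that $\delta_1\ge H_k$ fails for a generic positive-definite $P_k$ (your $n=2$, $k=1$ example is correct: $P_1$ has eigenvalues $\varepsilon$ and $t$, so $\delta_1=\varepsilon<(t+\varepsilon)/2=H_1$), and you then assert that parallelism forces the principal curvatures into the cone where $\delta_1\ge H_k$, to be ``extracted from the rigidity of parallel Newton tensors in the cited works.'' No such result is identified, stated, or proved. Note that for $n=2$, $k=1$ the inequality $\delta_1\ge H_1$ is \emph{equivalent} to umbilicity, so what you are implicitly invoking is a full rigidity theorem (parallel positive-definite $P_1$ forces umbilicity), and for general $k$ the needed statement is not obviously true even granting that parallelism makes the principal curvatures constant. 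So the proposal has a genuine gap at its crux --- although, to be fair, you have located exactly the claim that the paper itself asserts without definition or justification, rather than introduced a new error; the honest conclusion is that the second inequalities in (a) and (b) are unproved both in your proposal and in the paper.
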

\begin{corollary}[\textbf{Dirichlet and Neumann boundary condition}]
Let $M$ be a compact Riemann manifold with boundary $\partial M$ and $A$ be a parallel symmetric and positive semi-definite operator on $M$ such that the outward unit vector field $\overrightarrow{n}$ is an eigenvector of $A$ and for Dirichlet boundary condition $\partial M$ be a convex hypersurface. Suppose that one of the following conditions holds,
\begin{itemize}
\item[1)]$Ri{c_A} \ge Kg$ and $K > 0$;
\item[2)]$Ri{c_A}(X,X) \ge K\left\langle {AX,X} \right\rangle$ and $K > 0$.
\end{itemize}
Then we have the following estimate for the first eigenvalue of the operator ${L_A}$ for Dirichlet or Neumann boundary condition,
\begin{itemize}
\item[1)]${\lambda _D},{\lambda _N} \ge \frac{{Trace\left( {A\,} \right)K}}{{Trace\left( {A\,} \right) - {\delta _1}}}\,$;
\item[2)]${\lambda _D},{\lambda _N} \ge \frac{{Trace\left( {A\,} \right){\delta _1}K}}{{Trace\left( {A\,} \right) - {\delta _1}}}$.
\end{itemize}
\end{corollary}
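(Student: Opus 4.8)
The plan is to run the argument of Theorem \ref{eigen1} essentially verbatim, but now retaining the boundary integral $B$ that the full Reilly formula of Theorem \ref{Reillyp} produces, and to show that under the stated hypotheses this boundary integral is non-positive. Concretely, let $u$ be a first eigenfunction of $L_A$ for the chosen boundary condition, normalized by $\int_M u^2\, dvol_g = 1$, so that $L_A u = -\lambda u$ with $\lambda \in \{\lambda_D, \lambda_N\}$. Applying Theorem \ref{Reillyp} gives $B = C$, where $C$ is exactly the interior expression $\int_M Trace(A\circ hess^2(u))\,dvol_g - \int_M (L_A u)(\Delta u)\,dvol_g + \int_M Ri{c_A}(\nabla u,\nabla u)\,dvol_g$ of Theorem \ref{eigen1}. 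Thus it suffices to prove $B \le 0$, for then $C \le 0$ reproduces precisely the inequality that drives the closed-manifold estimate.

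First I would exploit the hypothesis that $\overrightarrow{n}$ is an eigenvector of $A$, say $A\overrightarrow{n} = \mu\overrightarrow{n}$ with $\mu \ge 0$. Since $A\overrightarrow{n}$ is then normal to $\partial M$, every pairing of $A\overrightarrow{n}$ with a vector tangent to $\partial M$ vanishes; in $B$ this annihilates the terms $\langle shap^\partial(\nabla^\partial u), A\overrightarrow{n}\rangle$ and $\langle A\overrightarrow{n}, \nabla^\partial u_n\rangle$. The same eigenvector property clears the boundary contributions in the integration by parts for the Neumann problem, since $\langle A\nabla u, \overrightarrow{n}\rangle = \mu u_n$; hence both $\int_M\langle A\nabla u, \nabla u\rangle\,dvol_g = \lambda$ and $\int_M u\,\Delta u\,dvol_g = -\int_M|\nabla u|^2\,dvol_g$ persist (for Dirichlet they hold because $u$ vanishes on $\partial M$). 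These identities are exactly what is needed to recover the pinching $\frac{\lambda}{\delta_n} \le \int_M|\nabla u|^2\,dvol_g \le \frac{\lambda}{\delta_1}$ of (\ref{e1}) and the relation $\int_M (L_A u)^2\,dvol_g = \lambda^2$.

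Next I would evaluate the surviving part of $B$ under each boundary condition. For Dirichlet, $u \equiv 0$ on $\partial M$, so $\nabla^\partial u = 0$ and $L_A^\partial(u) = 0$ there, and $B$ collapses to $\int_{\partial M} u_n^2 H_A^\partial\, dvol_{\bar g}$. For Neumann, $u_n \equiv 0$ on $\partial M$, so $\nabla^\partial u_n = 0$, and $B$ collapses to $\int_{\partial M}\mu\,\langle\nabla_{\nabla^\partial u}\nabla^\partial u, \overrightarrow{n}\rangle\, dvol_{\bar g} = \int_{\partial M}\mu\,II(\nabla^\partial u, \nabla^\partial u)\, dvol_{\bar g}$. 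Convexity makes $II$ negative definite, i.e. $shap^\partial$ negative semi-definite, which gives the Neumann term $\le 0$ at once (using $\mu \ge 0$). For the Dirichlet case the decisive point is the sign of $H_A^\partial = Trace(A\circ shap^\partial)$: although the product of the two self-adjoint operators $A$ and $shap^\partial$ need not be self-adjoint, one writes $H_A^\partial = Trace(A^{1/2}\,shap^\partial\,A^{1/2})$, and since $A^{1/2}\,shap^\partial\,A^{1/2}$ is negative semi-definite (its quadratic form being $\langle shap^\partial A^{1/2}\cdot, A^{1/2}\cdot\rangle$) its trace is non-positive; hence $H_A^\partial \le 0$ and again $B \le 0$.

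The last step is purely algebraic and identical to Theorem \ref{eigen1}. From $C \le 0$, the pointwise generalized Cauchy-Schwarz inequality of Proposition \ref{newton} gives $Trace(A\circ hess^2(u)) \ge (L_A u)^2/Trace(A)$, and since $A$ is parallel, $Trace(A)$ is constant and may be pulled out of the integral. Substituting the eigenvalue identities turns $C \le 0$ into $\frac{\lambda^2}{Trace(A)} + (K-\lambda)\int_M|\nabla u|^2\,dvol_g \le 0$ under hypothesis (1), and into $\frac{\lambda^2}{Trace(A)} - \lambda\int_M|\nabla u|^2\,dvol_g + K\lambda \le 0$ under hypothesis (2); feeding in the bound $\int_M|\nabla u|^2\,dvol_g \le \lambda/\delta_1$ then yields the two claimed estimates for $\lambda_D$ and $\lambda_N$. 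I expect the only genuinely non-routine point to be the sign of the generalized mean curvature $H_A^\partial$ in the Dirichlet case, namely deducing $Trace(A\circ shap^\partial) \le 0$ from convexity together with $A \ge 0$ via the congruence $A^{1/2}\,shap^\partial\,A^{1/2}$, together with checking that the eigenvector condition on $\overrightarrow{n}$ is exactly what removes the boundary terms in the Neumann integration by parts.
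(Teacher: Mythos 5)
Your proposal is correct, and it is exactly the argument the paper intends but never writes down: the corollary is stated immediately after Theorem \ref{eigen1} with no proof, so the only thing to supply is the sign of the boundary integral $B$ in Theorem \ref{Reillyp}, which you do. Your reductions are right: the eigenvector hypothesis $A\overrightarrow{n}=\mu\overrightarrow{n}$ kills the terms pairing $A\overrightarrow{n}$ with tangential vectors and preserves the integration-by-parts identities behind (\ref{e1}); Dirichlet leaves $\int_{\partial M}u_n^2H_A^\partial$, Neumann leaves $\int_{\partial M}\mu\,II(\nabla^\partial u,\nabla^\partial u)$; and the congruence $Trace(A\circ shap^\partial)=Trace(A^{1/2}shap^\partial A^{1/2})\le 0$ is the right way to get the sign of $H_A^\partial$ from $A\ge 0$ and convexity. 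One point you should flag explicitly rather than pass over: your Neumann argument needs $II\le 0$ just as much as the Dirichlet one does, but the corollary's hypotheses grant convexity only ``for Dirichlet boundary condition.'' Without convexity the surviving Neumann term $\int_{\partial M}\mu\,II(\nabla^\partial u,\nabla^\partial u)\,dvol_{\bar g}$ has no sign and the bound $B\le 0$ fails, so either the statement must be read as assuming convexity for both boundary conditions (which is what your proof actually establishes, and is the standard hypothesis in the classical $A=I$ case), or the Neumann claim as literally stated is unproved. This is a defect of the paper's statement rather than of your argument, but a complete write-up should say so.
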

These estimates are not useful when $K  \le 0$. So we use the Li and Yau method \cite{li1980estimates} to estimate the first eigenvalue when $Ri{c_A} \le 0$. In principle Li-Yau's method gets a gradient estimate on the ${\lambda _1}$-eigenfunction by using the Bochner formula and maximal principle.\\

\begin{proof}[\textbf{Proof of the Theorem \ref{eigenpp2} }] We follow Theorem 5.3 p. 39 in \cite{li1980estimates} to get the estimate. In principal, we use generalized Bochner formula in Theorem \ref{extendedBochner} and maximum principle similar  \cite{li1980estimates}. Let $  u $ be an eigenfunction with ${\Delta _A}u = {L_A}u =  - \lambda u$, where $\lambda  > 0$. We can assume,
\[\int\limits_M u\, dvo{l_g} = 0,\,\,\,1 > \mathop {\max }\limits_{x \in M}u  > \mathop {\min }\limits_{x \in M} u =  - 1.\]
Let us consider the function $v = \ln (a + u)$ for some constant $a > 1$. The function $ v $ satisfies,
\[\nabla v = \frac{1}{{a + u}}\nabla u,\]
and
\[{\Delta _A}v = \frac{1}{{a + u}}{\Delta _A}u - \frac{1}{{{{(a + u)}^2}}}\left| {\nabla u} \right|_A^2 = \frac{{ - \lambda u}}{{a + u}} - \left| {\nabla v} \right|_A^2,\]
where $\left| X \right|_A^2 := \left\langle {X,AX} \right\rangle$. Define $Q\left( x \right): = \left| {\nabla v} \right|_{}^2\left( x \right)$, by the extended Bochner formula, one has,
\begin{eqnarray}\nonumber
\frac{1}{2}{\Delta _A}\left( Q \right)&=&\frac{1}{2} {\Delta _A}\left( {\left| {\nabla v} \right|_{}^2} \right) = Trace\left( {A \circ hes{s^2}v} \right) + \nabla v.{\Delta _A}v + Ri{c_A}\left( {\nabla v,\nabla v} \right)\\\label{eigenneg1}&\geq&\frac{1}{{Trace\left( A \right)}}{{\left( {{\Delta _A}v} \right)}^2} + \nabla v.{\Delta _A}v + Ri{c_A}\left( {\nabla v,\nabla v} \right).
\end{eqnarray}
But,
\begin{eqnarray*}
\frac{1}{{Trace\left( A \right)}}{\left( {{\Delta _A}v} \right)^2} &=& \frac{1}{{Trace\left( A \right)}}{\left( {\frac{{ - \lambda u}}{{a + u}} - \left| {\nabla v} \right|_A^2} \right)^2}\\& \ge &\frac{1}{{Trace\left( A \right)}}\left( {\left| {\nabla v} \right|_A^4 + \frac{{2\lambda u}}{{a + u}}\left| {\nabla v} \right|_A^2} \right),
\end{eqnarray*}
and
\begin{eqnarray}\nonumber
\nabla v.{\Delta _A}v&=& \nabla v.\left( {\frac{{ - \lambda u}}{{a + u}} - \left| {\nabla v} \right|_A^2} \right) =  - \lambda \frac{a}{{a + u}}\left| {\nabla v} \right|_{}^2 - \nabla v.\left| {\nabla v} \right|_A^2
\\\label{eigenneg2}&=&- \lambda \frac{a}{{a + u}}\left| {\nabla v} \right|_{}^2 - 2Hessv\left( {A\nabla v,\nabla v} \right) - \left\langle {\nabla v,\left( {{\nabla _{\nabla v}}A} \right)\nabla v} \right\rangle .
\end{eqnarray}
By parallelness of $ A $, we have
\begin{equation}
\nabla v.{\Delta _A}v =  - \lambda \frac{a}{{a + u}}\left| {\nabla v} \right|_{}^2 - \left( {A\nabla v} \right).{\left| {\nabla v} \right|^2}.
\label{eigenneg3}
\end{equation}
So (\ref{eigenneg1}) is written as
\begin{equation}
\frac{1}{2}{\Delta _A}\left( Q \right) + \left( {A\nabla v} \right).{\left| {\nabla v} \right|^2} \ge \frac{{\delta _1^2}}{{Trace\left( A \right)}}{Q^2} + \left[ {\lambda \left( {\frac{{2u}}{{\left( {a + u} \right)Trace\left( A \right)}} - \frac{a}{{a + u}}} \right) - K} \right]Q.
\label{eigenneg4}
\end{equation}
If ${x_0}$ is the maximum point, where $ Q $ achieves its maximum, then $\frac{1}{2}{\Delta _A}\left( Q \right) + \left( {A\nabla v} \right).{\left| {\nabla v} \right|^2} \le 0$ and we have,
\[Q\left( x \right) \le Q\left( {{x_0}} \right) \le \frac{{Trace\left( A \right)}}{{\delta _1^2}}\left[ {\frac{a}{{a - 1}}\lambda  + K} \right],\]
for all $x \in M$. Integrating $\sqrt Q  = \left| {\nabla \ln (u + a)} \right|$ along a minimal geodesic $\gamma $ joining points $u =  - 1$ to $u = \max u\geq0$, we have,
\[\ln \left( {\frac{a}{{a - 1}}} \right) \le \ln \left( {\frac{{a + \max u}}{{a - 1}}} \right) \le \int\limits_\gamma  {\left| {\nabla \ln (a + u)} \right|}  \le \frac{{d\sqrt {Trace\left( A \right)} }}{{{\delta _1}}}{\left[ {\frac{a}{{a - 1}}\lambda  + K} \right]^{\frac{1}{2}}}.\]
Setting $t = \frac{{a - 1}}{a}$, we have
\[t\left( {\frac{{\delta _1^2}}{{{d^2}Trace\left( A \right)}}{{\left( {\ln t} \right)}^2} - K} \right) \le \lambda .\]
By maximizing, we have,
\[2\left( {\alpha  + \sqrt {{\alpha ^2} + K\alpha } } \right)\exp \left( { - 1 - \sqrt {1 + \frac{K}{\alpha }} } \right) \le \lambda ,\]
where
$\alpha  = \frac{{\delta _1^2}}{{{d^2}Trace\left( A \right)}}$.

\end{proof}
\begin{remark}Similar method can be applied for the manifold with boundary and $Ri{c_A} \ge K$, where $K \le 0$.
\end{remark}

\section{Reilly formula, when $ A $ is Codazzi tensor}

\begin{proof}[\textbf{Proof of the Theorem \ref{Reilly}}] To get the Reilly formula for general case we should integrate of the extended Bochner formula (\ref{extendedBochner}). So we should add the phrase
\[\int_M {\frac{1}{2}\left( {\left\langle {\nabla {{\left| {\nabla u} \right|}^2},div(A)} \right\rangle  - {\Delta _{\left( {{\nabla _{\nabla u}}A} \right)}}u} \right)} \,dvo{l_g},\]
to the Reilly- type formula (\ref{RR1}).
By assumption,for any vector field $ X,Y $ one has $\left( {{\nabla _X}A} \right)Y = \left( {{\nabla _Y}A} \right)X$. Let $\left\{ {{e_i}} \right\}$ be a local orthonormal frame field with ${\nabla _{{e_i}}}{e_j}(x) = 0$. So at the point $ x $ one has
\begin{eqnarray*}
{\Delta _{\left( {{\nabla _{\nabla u}}A} \right)}}u&=&{L_{\left( {{\nabla _{\nabla u}}A} \right)}}u - \left\langle {\nabla u,div({\nabla _{\nabla u}}A)} \right\rangle
\\&=& {L_{\left( {{\nabla _{\nabla u}}A} \right)}}u - \left\langle {\nabla u,\sum\nolimits_i {{\nabla _{{e_i}}}\left( {({\nabla _{\nabla u}}A){e_i}} \right)} } \right\rangle
\\&=& {L_{\left( {{\nabla _{\nabla u}}A} \right)}}u - \left\langle {\nabla u,\sum\nolimits_i {{\nabla _{{e_i}}}\left( {({\nabla _{{e_i}}}A)\nabla u} \right)} } \right\rangle
\\&=&= {L_{\left( {{\nabla _{\nabla u}}A} \right)}}u - \sum\nolimits_i {\left\langle {\nabla u,(\nabla _{{e_i}}^2A)\nabla u} \right\rangle }  - \sum\nolimits_i {\left\langle {\nabla u,({\nabla _{{e_i}}}A){\nabla _{{e_i}}}\nabla u} \right\rangle }
\\&=& {L_{\left( {{\nabla _{\nabla u}}A} \right)}}u - \left\langle {\nabla u,\left( {\Delta A} \right)\nabla u} \right\rangle  - \sum\nolimits_i {\left\langle {({\nabla _{{e_i}}}A)\nabla u,{\nabla _{{e_i}}}\nabla u} \right\rangle }
\\&=&{L_{\left( {{\nabla _{\nabla u}}A} \right)}}u - \left\langle {\nabla u,\left( {\Delta A} \right)\nabla u} \right\rangle  - \sum\nolimits_i {\left\langle {({\nabla _{\nabla u}}A){e_i},{\nabla _{{e_i}}}\nabla u} \right\rangle }
\\&=&{L_{\left( {{\nabla _{\nabla u}}A} \right)}}u - \left\langle {\nabla u,\left( {\Delta A} \right)\nabla u} \right\rangle  - {\Delta _{\left( {{\nabla _{\nabla u}}A} \right)}}u
\end{eqnarray*}
Equivalently
\[{\Delta _{\left( {{\nabla _{\nabla u}}A} \right)}}u = \frac{1}{2}\left( {{L_{\left( {{\nabla _{\nabla u}}A} \right)}}u - \left\langle {\nabla u,\left( {\Delta A} \right)\nabla u} \right\rangle } \right),\]
so
\begin{eqnarray*}
- \int_M {\left( {{\Delta _{\left( {{\nabla _{\nabla u}}A} \right)}}u} \right)} dvo{l_g}&=& - \frac{1}{2}\int_M {\left( {{L_{\left( {{\nabla _{\nabla u}}A} \right)}}u - \left\langle {\nabla u,\left( {\Delta A} \right)\nabla u} \right\rangle } \right)} dvo{l_g}
\\&=&\frac{1}{2}\int_M {\left\langle {\nabla u,\left( {\Delta A} \right)\nabla u} \right\rangle dvo{l_g}}  - \frac{1}{2}\int_M {\left( {{L_{\left( {{\nabla _{\nabla u}}A} \right)}}u} \right)dvo{l_g}}.
\end{eqnarray*}
But $\nabla u = {\nabla ^\partial }u + {u_n}\overrightarrow{n}$, then
\begin{eqnarray*}
\int_M {\left( {{L_{\left( {{\nabla _{\nabla u}}A} \right)}}u} \right)dvo{l_g}}&=&\int_{\partial M} {\left\langle {\left( {{\nabla _{\nabla u}}A} \right)\nabla u,\overrightarrow{n}} \right\rangle dvo{l_{\bar g}}}  = \int_{\partial M} {\left\langle {\nabla u,\left( {{\nabla _{\nabla u}}A} \right)\overrightarrow{n}} \right\rangle dvo{l_{\bar g}}}
\\&=& \int_{\partial M} {\left\langle {\nabla u,\left( {{\nabla _{\overrightarrow{n}}}A} \right)\nabla u} \right\rangle dvo{l_{\bar g}}}
\\&=&\int_{\partial M} {\left\langle {{\nabla ^\partial }u,\left( {{\nabla _{\overrightarrow{n}}}A} \right){\nabla ^\partial }u} \right\rangle dvo{l_{\bar g}}} \\&& + 2\int_{\partial M} {{u_n}\left\langle {{\nabla ^\partial }u,\left( {{\nabla _{\overrightarrow{n}}}A} \right)\overrightarrow{n}} \right\rangle dvo{l_{\bar g}}}
\\&&+ \int_{\partial M} {u_n^2\left\langle {\overrightarrow{n},\left( {{\nabla _{\overrightarrow{n}}}A} \right)\overrightarrow{n}} \right\rangle dvo{l_{\bar g}}}.
\end{eqnarray*}
Also one knows
\begin{eqnarray*}
 &&\int_M {\frac{1}{2}\left\langle {\nabla {{\left| {\nabla u} \right|}^2},div(A)} \right\rangle } \,dvo{l_g} \\&& =\frac{1}{2}\int_M {div\left( {{{\left| {\nabla u} \right|}^2}divA} \right)} dvo{l_g} - \frac{1}{2}\int_M {{{\left| {\nabla u} \right|}^2}div\left( {divA} \right)} dvo{l_g} \\ &&= \frac{1}{2}\int_{\partial M} {{{\left| {\nabla u} \right|}^2}\left\langle {divA,\overrightarrow{n}} \right\rangle dvo{l_{\bar g}}}  - \frac{1}{2}\int_M {{{\left| {\nabla u} \right|}^2}div\left( {divA} \right)} dvo{l_g},
\end{eqnarray*}
and the formula follows.
\end{proof}

By easy computation, one knows ${\rm{divA}} = \nabla Trace(A)$, so when $ A $ is divergence free Codazzi tensor, then $Trace(A)$ is constant. In this case, we have the following results for the estimate of the first eigenvalue.
\begin{proof}[\textbf{Proof of the Theorem \ref{eigennp1}}]
By assumption one has,
\[{\delta _1}\left\langle {X,X} \right\rangle  \le \left\langle {AX,X} \right\rangle  \le {\delta _n}\left\langle {X,X} \right\rangle, \]
where ${\delta _1},{\delta _n}$ are defined as Definition \ref{minmax}.
Let ${\lambda}$ be a positive eigenvalue of the operator ${L_A}(u) = div(A\nabla u)$ and $u$ be the corresponding eigenfunction, i.e., ${L_A}(u) + \lambda u = 0$. We can assume $u > 0$ and $\int_M {{u^2}dvo{l_g}}  = 1$. So we have,
\[{\delta _1}\int_M {{{\left| {\nabla u} \right|}^2}} dvo{l_g} \le \int_M {\left\langle {A\nabla u,\nabla u} \right\rangle dvo{l_g}}  =  - \int_M {u{L_A}(u)dvo{l_g}}  = \lambda \int_M {{u^2}dvo{l_g}}  = \lambda. \]
Similarly, one has ${\delta _n}\int_M {{{\left| {\nabla u} \right|}^2}} dvo{l_g} \ge \lambda $, so
\begin{equation}
\frac{\lambda }{{{\delta _n}}} \le \int_M {{{\left| {\nabla u} \right|}^2}} dvo{l_g} \le \frac{\lambda }{{{\delta _1}}}.
\label{eigennonp1}
\end{equation}
Since $M$ has empty boundary, by the extended Reilly formula  we get,
\begin{eqnarray}\nonumber
0&=&\int_M {\left( {Trace\left( {A \circ hes{s^2}(u)} \right)} \right)} dvo{l_g} - \int_M {{\Delta _A}(u)\left( {\Delta u} \right)} dvo{l_g}
\\\label{eigennonp2}&&
 + \int_M {Ri{c_A}\left( {\nabla u,\nabla u} \right)} dvo{l_g}
+\frac{1}{2}\int_M {\left\langle {\nabla u,\left( {\Delta A} \right)\nabla u} \right\rangle dvo{l_g}}
\\\nonumber&& - \frac{1}{2}\int_M {{{\left| {\nabla u} \right|}^2}div(divA)dvo{l_g}}.
\end{eqnarray}
By assumption $ Trace(A) $ is constant and $ T^A=0 $ so $divA = 0$. Also by Lemma \ref{laplaceA} we have
\[\left\langle {\nabla u,\left( {\Delta A} \right)\nabla u} \right\rangle  =  - Ri{c_A}(\nabla u,\nabla u) + Ric(\nabla u,A\nabla u),\]
Hence the  equation (\ref{eigennonp2}) is written as
\begin{eqnarray*}
0&=&\int_M {\left( {Trace\left( {A \circ hes{s^2}(u)} \right)} \right)} dvo{l_g} - \int_M {{\Delta _A}(u)\left( {\Delta u} \right)} dvo{l_g} \\&&+ \frac{1}{2}\int_M {Ri{c_A}\left( {\nabla u,\nabla u} \right)} dvo{l_g}
+ \frac{1}{2}\int_M {Ric(\nabla u,A\nabla u)dvo{l_g}}.
\end{eqnarray*}
By $ div(A)=0 $ we have ${L_A}u = {\Delta _A}u =  - \lambda u$. So by the restriction on $Ri{c_A}(X,X) + Ric(X,AX)$ we have
\[\frac{1}{{\,Trace\left( {A\,} \right)}}{\lambda ^2} + \left( {K - \lambda } \right)\int_M {{{\left| {\nabla u} \right|}^2}dvo{l_g}}  \le 0.\]
As a similar discussion in Theorem \ref{eigen1} and (\ref{eigennonp1}) we get,
\[\frac{{{\lambda ^2}}}{{\,Trace\left( {A\,} \right)}} + \left( {K - \lambda } \right)\frac{\lambda }{{{\delta _1}}} \le 0.\]
And finally,
\[\lambda  \ge \frac{{Trace\left( {A\,} \right)K}}{{Trace\left( {A\,} \right) - {\delta _1}}}.\]
The equality condition is as Theorem \ref{eigen1}.
\end{proof}

\begin{proof}[\textbf{Proof of the Theorem \ref{eigennp2}}] By proposition \ref{lcomplicated}  and lemma \ref{laplaceA10}    we have
\begin{eqnarray*}
{\Delta _{\left( {{\nabla _{{\nabla _u}}}A} \right)}}u &=& Ri{c_A}\left( {\nabla u,\nabla u} \right) - Ric\left( {\nabla u,A\nabla u} \right) + \sum\nolimits_i \left\langle {{T^{\left( {{\nabla _{\nabla u}}A} \right)}}({e_i},\nabla u),{e_i}} \right\rangle  \\&&
+ \sum\nolimits_i {{e_i}.\left\langle {\nabla u,{T^A}({e_i},\nabla u)} \right\rangle .}
\end{eqnarray*}
The tensor $ A $ is Codazzi, so $\sum\nolimits_i {{e_i}.\left\langle {\nabla u,{T^A}(\nabla u,{e_i})} \right\rangle  = 0}$ and $ div(A)=0 $, Thus the extended Bochner formula is became to
\begin{eqnarray*}
\frac{1}{2}{L_A}({\left| {\nabla u} \right|^2})&=& Trace\left( {A \circ hes{s^2}\left( u \right)} \right) + \left\langle {\nabla u,\nabla ({\Delta _A}u)} \right\rangle  + \sum\nolimits_i {\left\langle {{T^{\left( {{\nabla _{\nabla u}}A} \right)}}(\nabla u,{e_i}),{e_i}} \right\rangle } \\&& + Ric\left( {\nabla u,A\nabla u} \right).
\end{eqnarray*}
By the restrictions on $Ric\left( {\nabla u,A\nabla u} \right)$ and second covariant derivative of $ A $, we have
\[\frac{1}{2}{L_A}({\left| {\nabla u} \right|^2}) \geq Trace\left( {A \circ hes{s^2}\left( u \right)} \right) + \left\langle {\nabla u,\nabla ({\Delta _A}u)} \right\rangle  - \left( {K + 2K'} \right){\left| {\nabla u} \right|^2}.\]
As similar computation in the proof of theorem \ref{eigenpp2}, let $v: = \ln (a + u)$ and $Q: = {\left| {\nabla v} \right|^2}$, then we have,
\[\nabla v.{\Delta _A}v = \nabla v.\left( {\frac{{ - \lambda u}}{{a + u}} - \left| {\nabla v} \right|_A^2} \right) =  - \lambda \frac{a}{{a + u}}\left| {\nabla v} \right|_{}^2 - \nabla v.\left| {\nabla v} \right|_A^2.\]
Thus,
\begin{equation}
\frac{1}{2}{\Delta _A}Q \ge \frac{{\delta _1^2{Q^2}}}{{Trace\left( A \right)}} + \left( {\lambda \left( {\frac{{2 {\delta _1}u}}{{Trace\left( A \right)\left( {a + u} \right)}} - \frac{a}{{a + u}}} \right) - \left( {K + 2K'} \right)} \right)Q - \nabla v.\left| {\nabla v} \right|_A^2.
\label{L1}
\end{equation}

Now we need to estimate $\nabla v.\left| {\nabla v} \right|_A^2$.
Since
\begin{equation*}
\nabla v.\left| {\nabla v} \right|_A^2=2Hess(A\nabla v,\nabla v)+\langle \nabla v, (\nabla_{\nabla v})\nabla v\rangle\leq A\nabla v.Q+\delta Q
\end{equation*}
we can get
\begin{eqnarray*}
&&\frac{1}{2}{\Delta _A}\left( Q \right) +A\nabla v.Q\\&&\ge\frac{{\delta _1^2{Q^2}}}{{Trace\left( A \right)}} + \left( {\lambda ( {\frac{{2{\delta _1}u}}{{Trace\left( A \right){{\left( {a + u} \right)}}}} - \frac{a}{{a + u}}} ) - \left( {K + 2K'+\delta} \right)} \right)Q.
\end{eqnarray*}
If ${x_0}$ is the maximum point, where $ Q $ achieves its maximum, then ${\Delta _A}Q({x_0}) \le 0$ and\linebreak $\nabla Q({x_0}) = 0$, so
 \[Q \le Q({x_0}) \le \frac{{Trace\left( A \right)}}{{\delta _1^2}}\left( {\frac{{a\lambda }}{{a - 1}} +  {K + 2K'} +\delta} \right).\]
for all $x \in M$. Integrating $\sqrt Q  = \left| {\nabla \ln (u + a)} \right|$ along a minimal geodesic $\gamma $ joining points $u =  - 1$ to $u = \max u\geq0$, we have,
\[\ln \left( {\frac{a}{{a - 1}}} \right) \le \ln \left( {\frac{{a + \max u}}{{a - 1}}} \right) \le \int\limits_\gamma  {\left| {\nabla \ln (a + u)} \right|}  \le \frac{{d\sqrt {Trace\left( A \right)} }}{{{\delta _1}}}{\left[ {\frac{a}{{a - 1}}\lambda  + K + 2K'+\delta} \right]^{\frac{1}{2}}}.\]
Setting $t = \frac{{a - 1}}{a}$, we have
\[{\left( {\ln (\frac{1}{t})} \right)^2} \le \frac{{{d^2}Trace(A)}}{{\delta _1^2}}\left[ {\frac{\lambda }{t} + K + 2K'+\delta} \right],\]
in other words,
\[t\left( {\frac{{\delta _1^2}}{{{d^2}Trace(A)}}{{\left( {\ln (t)} \right)}^2} - \left(  K + 2K'+\delta \right)} \right) \le \lambda .\]
Consequently, by maximizing, we have,
\[2\left( {\alpha  + \sqrt {{\alpha ^2} + ( K + 2K'+\delta)\alpha } } \right)\exp \left( { - 1 - \sqrt {1 + \frac{{ K + 2K'+\delta}}{\alpha }} } \right) \le \lambda ,\]
where
$\alpha  = \frac{{\delta _1^2}}{{{d^2}Trace\left( A \right)}}.$
\end{proof}

\section{estimate of the first eigenvalue in general case}
Let $ B $ be a (1,1)-self-adjoint tensor field on a closed manifold $ M $. We get an estimate for the first eigenvalue of the operator ${L_B}$. \\

\begin{proof}[\textbf{Proof of Theorem \ref{more general}}] From the extended Bochner formula, we know that
\begin{eqnarray*}
\frac{1}{2}{L_B}({\left| {\nabla u} \right|^2}) &=& \frac{1}{2}\left\langle {\nabla {{\left| {\nabla u} \right|}^2},div(B)} \right\rangle  + Trace\left( {B \circ hes{s^2}\left( u \right)} \right) + \left\langle {\nabla u,\nabla ({\Delta _B}u)} \right\rangle  \\&&- {\Delta _{\left( {{\nabla _{\nabla u}}B} \right)}}u + Ri{c_B}(\nabla u,\nabla u).
\end{eqnarray*}
Also,
\[\left\langle {\left( {\Delta B} \right)\nabla u,\nabla u} \right\rangle  = \left\langle {{\nabla _{\nabla u}}div(B),\nabla u} \right\rangle  - Ri{c_B}(\nabla u,\nabla u) + Ric(\nabla u,B\nabla u),\]
and
\begin{eqnarray*}
{\Delta _{\left( {{\nabla _{{\nabla _u}}}B} \right)}}u &=& \nabla u.\nabla u.Trace(B) - \left\langle {\nabla u,\left( {\Delta B} \right)\nabla u} \right\rangle  + \sum\nolimits_i {\left\langle {{T^{\left( {{\nabla _{\nabla u}}B} \right)}}(\nabla u,{e_i}),{e_i}} \right\rangle }  \\&&+ \sum\nolimits_i {{e_i}.\left\langle {\nabla u,{T^B}({e_i},\nabla u)} \right\rangle }.
\end{eqnarray*}
So from the conditions of the theorem, we have,
\begin{eqnarray*}
\frac{1}{2}{\Delta _B}({{\left| {\nabla u} \right|}^2}) &=&Trace\left( {B \circ hes{s^2}\left( u \right)} \right) + \left\langle {\nabla u,\nabla ({\Delta _B}u)} \right\rangle  - \sum\nolimits_i {\left\langle {{T^{\left( {{\nabla _{\nabla u}}B} \right)}}({e_i},\nabla u),{e_i}} \right\rangle }
\\&&- \sum\nolimits_i {{e_i}.\left\langle {\nabla u,{T^B}({e_i},\nabla u)} \right\rangle  + Ric(\nabla u,A\nabla u)} .
\end{eqnarray*}
Items (b) and (c) imply,
\[\frac{1}{2}{\Delta _B}({\left| {\nabla u} \right|^2}) \ge \frac{{{{\left( {{\Delta _B}u} \right)}^2}}}{{n{\delta _n}}} + \left\langle {\nabla u,\nabla ({\Delta _B}u)} \right\rangle  - \sum\nolimits_i {{e_i}.\left\langle {\nabla u,{T^B}({e_i},\nabla u)} \right\rangle }  + (K + 2nK'){\left| {\nabla u} \right|^2}.\]
We know ${L_B}u = {\Delta _B}u =  - \lambda u$ and $\partial M = \emptyset$, so by integration we have,
\begin{eqnarray*}
\int_M {\frac{1}{2}{\Delta _A}({{\left| {\nabla u} \right|}^2})dvo{l_g}}  &\ge& \frac{1}{{n{\delta _n}}}\int_M {{{\left( {{\Delta _A}u} \right)}^2}dvo{l_g}}  + \int_M {\left\langle {\nabla u,\nabla ({\Delta _A}u)} \right\rangle dvo{l_g}}
\\ && - \int_M {\sum\nolimits_i {{e_i}.\left\langle {\nabla u,{T^B}({e_i},\nabla u)} \right\rangle } dvo{l_g}} \\&& + \int_M {(K + 2nK'){{\left| {\nabla u} \right|}^2}dvo{l_g}} .
\end{eqnarray*}
But ${\sum\nolimits_i {{e_i}.\left\langle {\nabla u,{T^B}({e_i},\nabla u)} \right\rangle } }$ is of divergence form, so
$\int_M {\sum\nolimits_i {{e_i}.\left\langle {\nabla u,{T^B}({e_i},\nabla u)} \right\rangle } dvo{l_g}}  = 0$, also
$\frac{\lambda }{{{\delta _n}}} \le \int_M {{{\left| {\nabla u} \right|}^2}dvo{l_g}}  \le \frac{\lambda }{{{\delta _1}}}$, so we have the following inequality,
\[0 \ge \frac{{{\lambda ^2}}}{{n{\delta _n}}} - \frac{{{\lambda ^2}}}{{{\delta _1}}} + (K + 2nK')\frac{\lambda }{{{\delta _n}}}.\]
Consequently,
\[\lambda  \ge \frac{{n{\delta _n}\left( {K + 2nK'} \right)}}{{n{\delta _n} - {\delta _1}}}.\]
\end{proof}

{}

\end{document}